\documentclass{article}
\usepackage{amsmath}
\usepackage{bm}
\usepackage{graphicx}
\usepackage{amssymb}
\usepackage{ulem}
\usepackage[dvipsnames,svgnames]{xcolor}
\usepackage{caption}
\everymath{\displaystyle}
\DeclareMathOperator{\sgn}{sgn}

\usepackage{natbib}
\usepackage{svg}
\usepackage{amsthm}
\usepackage{geometry}
\usepackage{float}
\usepackage{setspace}
\usepackage{indentfirst}
\usepackage[T1]{fontenc}
\usepackage[utf8]{inputenc}
\usepackage{authblk}
\usepackage{fancyhdr}
\usepackage{afterpage}
\usepackage{hyperref}
\usepackage{textpos}
\usepackage{atbegshi}
\usepackage{marvosym}
\graphicspath{{figures/}}
\theoremstyle{plain}
\newtheorem{theorem}{Theorem}[section]
\newtheorem{lemma}{Lemma}[section]
\newtheorem{corollary}{Corollary}[section]
\newtheorem{proposition}{Proposition}[section]
\theoremstyle{definition}
\newtheorem{definition}{Definition}[section]
\newtheorem{example}{Example}[section]
\theoremstyle{nonumber}

\begin{document}

	\normalem
	\setstretch{1.3}
	
	\title{Coloring-allowed Invariants and the 4-phases Functions of Knotoids}
	
	\author[1]{Haocong Chen\thanks{412712chc1728@mail.dlut.edu.cn}}
	\author[1]{Jiacheng An\thanks{anan0312@163.com}}
	\author[1]{Fengling Li\thanks{{fenglingli@dlut.edu.cn}}}
	\affil[1]{School of Mathematical Sciences, Dalian University of Technology, Dalian 116024, P. R. China}
	\date{}
	\maketitle

	\begin{abstract}
	\textbf{Abstract.} 
	In recent years, numerous polynomial invariants of knotoids have been constructed, some of which are defined with the signs of the crossings. In this paper, the coloring-allowed invariants of planar knotoids, which is a class of planar knotoid invariants defined with the coloring number are introduced. We demonstrate several basic properties of the coloring number and some examples of coloring-allowed invariants, among which the 4-phases functions are discussed in detail, including their invariance and properties.\\
	
	\end{abstract}
	
	$Keywords:$ Planar knotoid; Gauss diagram; coloring number; coloring-allowed invariant; 4-phases function

	\section{Introduction}

	Knotoids can be viewed as knots which are not "closed". The concept of knotoid was first proposed by Turaev in \cite{1} in 2012. A knotoid diagram is a generic immersion of an oriented unit interval $[0,1]$ into an oriented surface, with over-under information assigned at double points. Two knotoid diagrams on a surface are equivalent if and only if they can be transformed into each other by a finite number of Reidemeister moves and isotopies on this surface. The corresponding equivalence classes are called knotoids in the surface. See \cite{1} for a detailed introduction.
	
	The theory of knotoids can be regarded as a generalization of classical knot theory, and many invariants of knots can be extended to knotoids. Various studies on invariants of knotoids have been presented so far. Gügümcü and Kauffman generalized knotoids to virtual knotoids by introducing virtual crossings and constructed several new invariants in \cite{7}. In \cite{17}, Gügümcü and Nelson considered biquandle colorings for knotoids in $\mathbb{R}^2$ or $S^2$, and they constructed several coloring invariants for knotoids derived as enhancements of the biquandle counting invariant. In \cite{22}, Gügümcü and Kauffman constructed quantum invariants for planar Morse knotoid diagrams. A generalization of quantum knot invariants to a knotoid setting was given by Moltmaker in \cite{21}. In \cite{18}, Moltmaker and van der Veen showed how invariants of knotoids generally give rise to well-behaved measures of how much an open curve is knotted, and improved the classification of planar knotoids with up to five crossings with these invariants. In \cite{19}, Elhamdadi, Moltmaker and Saito defined several new planar knotoid invariants based on quandles which are able to detect planarity. Miyazawa defined a multi-variable polynomial invariant for knotoids and linkoids, which is an enhancement of the bracket polynomial for knotoids introduced by Turaev in \cite{20}.

	In recent years, numerous polynomial invariants of classical or virtual knots and knotoids have been constructed, among which some are defined via Gauss diagrams or the weight of the crossings. For instance, Kauffman defined the affine index polynomial of a knotoid diagram by
	\begin{center}
		$P_{D}(t)=\sum_{c\in C(D)}^{} \sgn(c)(t^{d(c)}-1 )$,
	\end{center}
	where $d(c)$ is the degree of a chord in the Gauss diagram of $D$, which is referred to as the weight of a crossing in \cite{12}.
	In \cite{15}, Kim, Im and Lee introduced the index polynomial of knotoids and the $n$th polynomial invariant of virtual knotoids by means of Gauss diagrams.
	The index polynomial $F_{D}(t)$ of a knotoid diagram $D$ is defined as follows:
	\begin{center}
		$F_{D}(t)=\sum_{c\in C(D)}^{} \sgn(c)(t^{i(c)}-1 )$.
	\end{center}
	Feng and Li gave a new two-variable invariant of knotoids via Gauss diagrams in \cite{8}, which is defined as:
	\begin{center}
		$F_{D}(u,v)=\sum_{c\in C(D)}^{} \sgn(c)(u^{\widetilde{h}_c(v)} -1)$.
	\end{center}
	Feng, Li and Vesnin proposed a new invariant with 3 variables of knotoids in \cite{9}, which is defined by
	\begin{center}
		$H_{D}(x,y,v)=\sum_{c\in C(D)}^{}\sum_{n\in\mathbb{N}} \sgn(c)(x^{g_c^n(v)}-1 )y^n$.\\
	\end{center}

	The sign of a crossing $c$ $\sgn(c)$ appears in each of the invariants mentioned above. The effect of an $\Omega_1$ move on these invariants has to be taken into consideration when one constructs them. To avoid this, we will try to replace it by the so-called coloring number $u(c)$, which can be considered as a modification to the weight of a crossing. The notion of the weight of a crossing has been introduced in \cite{7}, \cite{10} and \cite{12}, and the coloring number has been introduced in \cite{3} and \cite{4}, where it is still called the "weight" of a crossing. Kauffman has proved in \cite{12} that the value of the weight at each crossing in an arbitrary classical knot diagram is 0, and so is the coloring number. Hence it is futile to construct invariants of classical knots with the coloring number as a factor of the coefficient of each term, since one can only obtain trivial invariants by this approach. Therefore we consider invariants of knotoids defined with the coloring number, namely the so-called coloring-allowed invariants of knotoids. As an example of coloring-allowed invariants, the 4-phases functions will be discussed in detail.

	In this paper, some basic notions are given in Section 2. In Section 3, we illustrate the definition and properties of coloring numbers, after which the definition of coloring-allowed invariants and some examples will be given. Four coloring-allowed invariants which look quite similar, namely the 4-phases functions, are defined in Section 4, and their invariance is verified in this section. Discussions related to the properties of the 4-phases invariants are presented in the last part of Section 4.

	\section{Preliminaries}

	At the beginning, let us give some basic notions and notations. Let $\Sigma$ be an oriented surface. A knotoid diagram $D$ in $\Sigma$ is a generic immersion of the unit interval $[0,1]$ into the interior of $\Sigma$. In particular, knotoid diagrams on $\Sigma=\mathbb{R}^2$ are referred to as planar knotoid diagrams. In this paper, a knotoid diagram is always assumed to be oriented, where the orientation is induced by the canonical orientation of the unit interval $[0,1]$ from $0$ to $1$. The images of $0$ and $1$, which are distinct from the crossings, are called the leg and the head of the knotoid diagram respectively.

	Geometrically, a planar knotoid diagram is a curve on the Euclidean plane that transversely intersects itself at some isolated crossings which are endowed with over/undercrossing information. To be precise, a crossing $c$ of a knotoid diagram $D$ is a point whose preimage under the immersion consists of exactly 2 elements. The set consisting of all crossings in $D$ is denoted by $C(D)$, namely
	\begin{center}
		$C(D)=\left\lbrace c\in D\mid f^{-1}(\left\lbrace c\right\rbrace  )=\left\lbrace a,b\right\rbrace ,0<a<b<1\right\rbrace $,
	\end{center}
	where $f$ is the immersion corresponding to the knotoid diagram $D$. The sign of a crossing $c$ in a knotoid diagram is defined by the way shown in Figure \ref{1}, and it is usually denoted by $\sgn (c)$. Every connected component of $D\setminus C(D)$ is called an open arc of $D$, and the closure of an open arc in $\mathbb{R}^2$ is called an arc of $D$. The set consisting of all arcs in $D$ is denoted by $A(D)$.

	\begin{figure}[ht]
		\centering
		\includegraphics[width=0.36\textwidth]{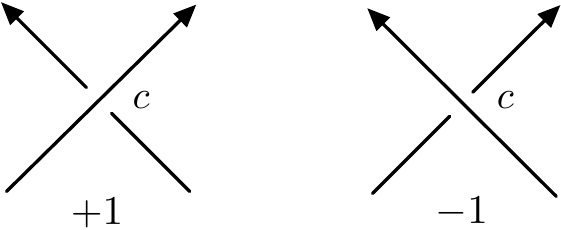}	
		\caption{The sign of a crossing}
		\label{1}
	\end{figure}

	Analogous to classical knots, the Reidemeister moves can be defined on knotoid diagrams as well, as shown in Figure \ref{2}. When a knotoid diagram is endowed with an orientation, the corresponding moves are called oriented Reidemeister moves. For two oriented planar knotoid diagrams $D_1$ and $D_2$, they are said to be equivalent if one can be transformed into the other via a sequence of oriented Reidemeister moves and local planar isotopies. The corresponding equivalence classes are called planar knotoids.

	\begin{figure}[ht]
		\centering
		\includegraphics[width=0.96\textwidth]{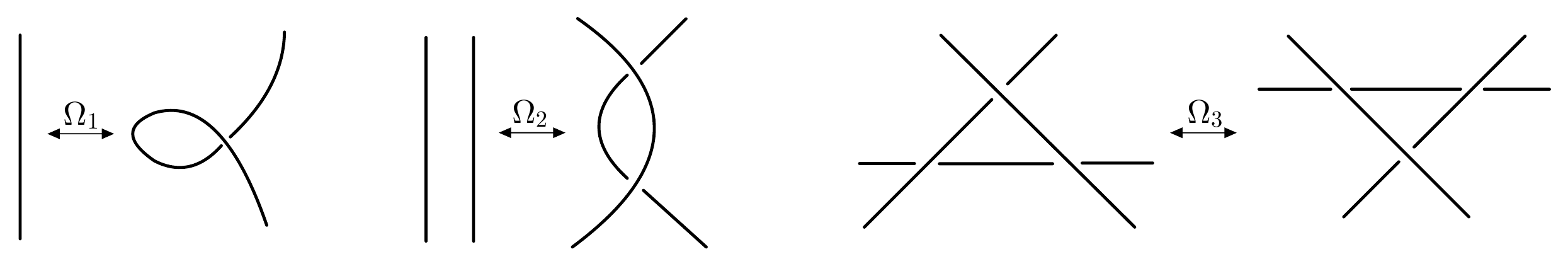}
		\caption{Three Reidemeister moves}
		\label{2}
	\end{figure}

	The knotoid diagram represented by an embedding $[0,1]\hookrightarrow \Sigma$ is said to be a trivial knotoid diagram. In Figure \ref{3}, the moves $\Omega_+$ and $\Omega_-$ are referred to as forbidden moves, since if the $\Omega_+$ and $\Omega_-$ moves were permitted, then any knotoid diagram could be easily reduced to a trivial knotoid diagram.

	\begin{figure}[ht]
		\centering
		\includegraphics[width=0.48\textwidth]{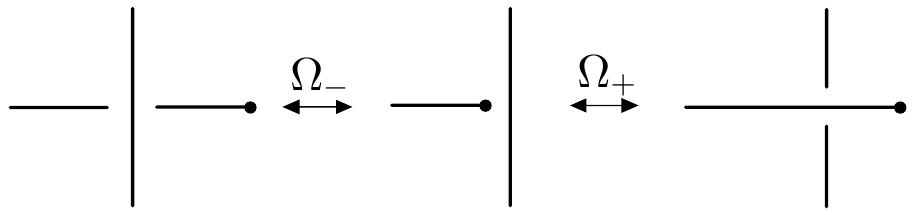}
		\caption{Forbidden moves}
		\label{3}
	\end{figure}

	The Gauss diagram $G(D)$ of a knotoid diagram $D$ is a counterclockwise arc together with several chords, each of which connects the preimages of a crossing in $D$. For each crossing $c$ in a knotoid diagram, there is a unique corresponding chord in its Gauss diagram, hence there is a natural bijection between $C(D)$ and the set consisting of all chords in $G(D)$. For the sake of simplicity, the chord in $G(D)$ corresponding to $c\in C(D)$ is also denoted by $c$. A chord is oriented from the preimage of the overpassing arc to the preimage of the underpassing arc of the corresponding crossing. Moreover, $\sgn(c)$ and $-\sgn(c)$ are marked at the starting point and ending point of the chord $c$ respectively, as shown in Figure \ref{4}. Sometimes $-\sgn(c)$ marked at the ending point of a chord can be omitted.

	\begin{figure}[ht]
		\centering
		\includegraphics[width=0.16\textwidth]{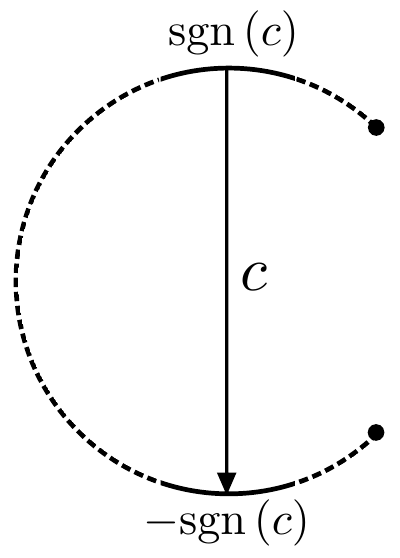}
		\caption{Gauss diagram}
		\label{4}
	\end{figure}

	Now let us turn to some notions which are related to the index function $g_c(v)$. For a knotoid diagram $D$ and a fixed chord $c\in C(D)$ in the corresponding Gauss diagram $G(D)$, denote the set of chords that pass through $c$ and point to the left hand side of $c$ by $l(c)=\left\lbrace l_1(c),l_2(c),\cdots,l_m(c)\right\rbrace$, and the set of chords that pass through $c$ and point to the right hand side of $c$ by $r(c)=\left\lbrace r_1(c),r_2(c),\cdots,r_n(c)\right\rbrace$, as shown in Figure \ref{5}(1).

	\begin{figure}[ht]
		\centering
		\includegraphics[width=0.48\textwidth]{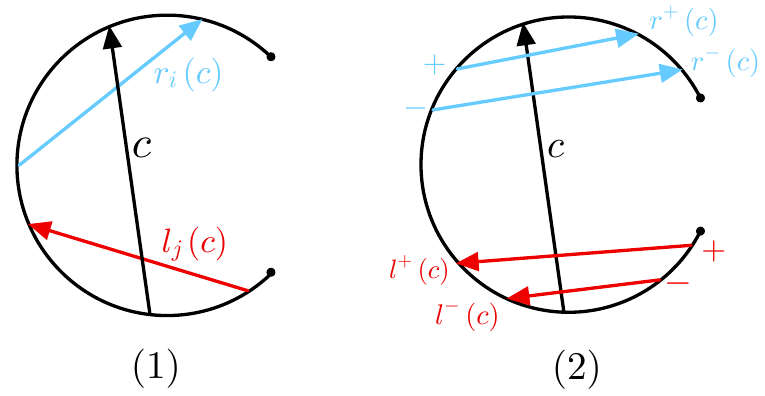}
		\caption{$l(c)$ and $r(c)$}
		\label{5}
	\end{figure}

	For a knotoid diagram $D$ and a fixed chord $c\in C(D)$, let
	\begin{center}
		$
		\begin{aligned}
			r^+(c)&=\left\lbrace c'\in r(c)\mid\sgn(c')=1\right\rbrace; \\
			r^-(c)&=\left\lbrace c'\in r(c)\mid\sgn(c')=-1\right\rbrace; \\
			l^+(c)&=\left\lbrace c'\in l(c)\mid\sgn(c')=1\right\rbrace; \\
			l^-(c)&=\left\lbrace c'\in l(c)\mid\sgn(c')=-1\right\rbrace .
		\end{aligned}
		$
	\end{center}
	The degree of $c$, which has been mentioned in \cite{13}, \cite{5}, \cite{6} and \cite{9}, is defined by
	\begin{center}
		$d(c)=|r^+(c)|-|r^-(c)|-|l^+(c)|+|l^-(c)|$,
	\end{center}

	\noindent where $|X|$ denotes the cardinality of a set $X$. In particular, if a chord $c$ is isolated, which is to say that no other chord has non-empty intersection with $c$, then $d(c)=0$.

	By the definition of $d(c)$, it is the sum of signs of chord endpoints on the left hand side of $c$ except the signs of endpoints of $c$. For each fixed chord $c$ in $G(D)$, we define a function $\phi_c:\mathbb{Z}\to \mathbb{Z}_{|d(c)|}$ as following:
	\begin{center}
		$\phi_c(k)\equiv k \pmod {|d(c)|} $,
	\end{center}
	where the elements of $\mathbb{Z}_{|d(c)|}$ are considered to be the representative elements of the residue classes here, namely  $\mathbb{Z}_{|d(c)|}=\left\lbrace 0,1,\cdots,|d(c)|-1\right\rbrace $. In this way, it is obvious that $\phi_c(k)$ is always uniquely determined by $k$, hence there is no ambiguity in the definition above. In particular, if $d(c)=0$, the image set of $\phi_c$ happens to be an empty set $\varnothing$. In this case, $\phi_c$ is defined to be an identity map on $\mathbb{Z}$. A salient feature of this function is that it is a homomorphism.

	With the function $\phi_c$, we can define the index function of a chord $c$ in $G(D)$. The index function $g_c(v)$ is defined by
	\begin{center}
		$g_c(v)=\sum_{e\in r(c)}\sgn(e)v^{\phi_c(d(e))}-\sum_{e\in l(c)}\sgn(e)v^{\phi_c(-d(e))}$.\\
	\end{center}
	By the definition of $\phi_c$, it is trivial that $v^{|d(c)|}=v^0=1$, hence
	$g_c(v)$ takes values in $\mathbb{Z}[v,v^{-1}]/(v^{|d(c)|}-1)$. In particular, when $v=1$ or $|d(c)|=1$, 
	\begin{center}
		$
		\begin{aligned}
			g_c(v)=\sum_{e\in r(c)}\sgn(e)-\sum_{e\in l(c)}\sgn(e)
			=\sum_{e\in r(c)}\sgn(e)+\sum_{e\in l(c)}\left( -\sgn(e)\right) 
		\end{aligned}
		$.\\
	\end{center}
	By the definition of a Gauss diagram,  $\sgn(e)$ coincides with the sign of the starting point of $e$, and $-\sgn(e)$ is the sign of the ending point of $e$.   Consequently, 
	\begin{center}
		$
		\begin{aligned}
			g_c(1)
			=\sum_{e\in r(c)}\sgn(e)+\sum_{e\in l(c)}\left( -\sgn(e)\right) =d(c),
		\end{aligned}
		$\\
	\end{center}
	since those chords whose endpoints are both on the left hand side of $c$ contribute $1+(-1)=0$ to $d(c)$.

	Another useful fact about the function $\phi_c$ is that it is an odd function when it is the exponent of $v$, namely $v^{\phi_c(-k)}=v^{-\phi_c(k)}$. Indeed, since $\phi_c$ is a homomorphism which has been shown above, we have $\phi_c(-k)+\phi_c(k)=\phi_c((-k)+k)=\phi_c(0)=0$. It follows that $v^{\phi_c(-k)}\cdot v^{\phi_c(k)}=v^{\phi_c(-k)+ \phi_c(k)}=v^0=1$.
	Therefore $v^{\phi_c(-k)}=v^{-\phi_c(k)}$.\\

	For any chord $c$ of $G(D)$, the counter-clockwise oriented arc $C$ on the boundary of $G(D)$ is divided into two sides by the chord $c$, one of which containing the head and the tail is called the lead side of $c$. Denote the set of chords that pass through $c$ and point to the lead side of $c$ by $\lambda(c)=\left\lbrace \lambda_1(c),\lambda_2(c),\cdots,\lambda_m(c)\right\rbrace$, and the set of chords that pass through $c$ and point from the lead side of $c$ to the other side by $\rho(c)=\left\lbrace \rho_1(c),\rho_2(c),\cdots,\rho_n(c)\right\rbrace$, as shown in Figure \ref{6}.

	\begin{figure}[ht]
		\centering
		\includegraphics[width=0.18\textwidth]{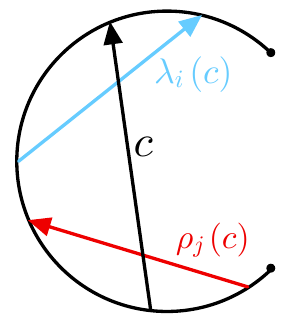}
		\caption{$\lambda(c)$ and $\rho(c)$}
		\label{6}
	\end{figure}

	Next we slightly modify the original definition of $g_c(v)$ to obtain another index function $h_c(v)$. Replacing $r(c)$ and $l(c)$ in the definition of $g_c(v)$ by $\rho(c)$ and $\lambda(c)$ respectively, the index function $h_c(v)$ is defined by
	\begin{center}
		$h_c(v)=\sum_{e\in \rho(c)}\sgn(e)v^{\phi_c(d(e))}-\sum_{e\in\lambda(c)}\sgn(e)v^{\phi_c(-d(e))}$.\\
	\end{center}
	Analogous to $g_c(v)$,
	$h_c(v)$ takes values in $\mathbb{Z}[v,v^{-1}]/(v^{|d(c)|}-1)$ as well. If the left hand side of $c$ is exactly the lead side of $c$, then $l(c)=\lambda(c)$ and $r(c)=\rho(c)$, which indicates that $g_c(v)$ coincides with $h_c(v)$. Otherwise, if the right hand side of $c$ is the lead side of $c$, then $r(c)=\lambda(c)$ and $l(c)=\rho(c)$, which gives that 
	\begin{center}
		$
		\begin{aligned}
			h_c(v)&=\sum_{e\in \rho(c)}\sgn(e)v^{\phi_c(d(e))}-\sum_{e\in\lambda(c)}\sgn(e)v^{\phi_c(-d(e))}\\
			&=\sum_{e\in l(c)}\sgn(e)v^{\phi_c(d(e))}-\sum_{e\in r(c)}\sgn(e)v^{\phi_c(-d(e))}\\
			&=\sum_{e\in l(c)}\sgn(e)(v^{-1})^{\phi_c(-d(e))}-\sum_{e\in r(c)}\sgn(e)(v^{-1})^{\phi_c(d(e))}\\
			&=-\left( \sum_{e\in r(c)}\sgn(e)(v^{-1})^{\phi_c(d(e))}-\sum_{e\in l(c)}\sgn(e)(v^{-1})^{\phi_c(-d(e))}\right) \\
			&=-g_c(v^{-1}).
		\end{aligned}
		$
	\end{center}

	Next we will introduce a new notion called the index of a chord. 	The intersection index of $c$, which is denoted by $i(c)$, is defined to be the sum of signs of chord endpoints on the lead side of $c$ except the signs of endpoints of $c$. If the left hand side of $c$ is exactly the lead side of $c$, then obviously $d(c)=i(c)$. Otherwise, if the right hand side of $c$ is the lead side of $c$, then $d(c)=-i(c)$, since for a chord $c'$ in $G(D)$ with $c'\cap c\neq \varnothing$, one of its endpoints lies on the left hand side of $c$ and the other one lies on the right hand side, hence
	\begin{center}
		$i(c)+d(c)=\sum\limits_{c'}\left( \sgn(c')+(-\sgn(c'))\right) =0$,
	\end{center}
	where the sum is taken over the set of chords in $G(D)$ that satisfy $c'\cap c\neq \varnothing$. In a word, $|d(c)|=|i(c)|$.

	Two functions which look quite similar to $g_c(v)$ and $h_c(v)$ will be given below. The index function $\widetilde{g}_c(v)$ is defined by
	\begin{center}
		$\widetilde{g}_c(v)=\sum_{e\in r(c)}\sgn(e)v^{\phi_c(i(e))}-\sum_{e\in l(c)}\sgn(e)v^{\phi_c(-i(e))}$,
	\end{center}	
	and the index function $\widetilde{h}_c(v)$ is defined by
	\begin{center}
		$\widetilde{h}_c(v)=\sum_{e\in \rho(c)}\sgn(e)v^{\phi_c(i(e))}-\sum_{e\in \lambda(c)}\sgn(e)v^{\phi_c(-i(e))}$.\\
	\end{center}
	If the left hand side of $c$ is exactly the lead side of $c$, then $l(c)=\lambda(c)$ and $r(c)=\rho(c)$, which indicates that $\widetilde{g}_c(v)$ coincides with $\widetilde{h}_c(v)$. Otherwise, if the right hand side of $c$ is the lead side of $c$, then a proof analogous to the one we have given when discussing the relationship between $g_c(v)$ and $h_c(v)$ earlier yields $\widetilde{h}_c(v)=-\widetilde{g}_c(v^{-1})$.\\

	To avoid ambiguity, when there are more than 1 knotoid diagrams in a proposition, let an object with a subscript $D$ denote the corresponding notion in $D$ to distinguish the same notion which appears in different knotoid diagrams. For instance, in a knotoid diagram $D$, the function $g_c(v)$ is denoted by $g_{c,D}(v)$, and $d(c)$ is denoted by $d_D(c)$, where $c\in C(D)$. For the sake of simplicity, if there is a natural correspondence between a subset of $C(D_1)$ and a subset of $C(D_2)$, where $D_1$ and $D_2$ are 2 planar knotoid diagrams, then the corresponding crossings will be denoted by the same notation. For instance, for a knotoid diagram $D$ and its inverse $-D$, the function $g_c(v)$ is denoted by $g_{c,D}(v)$ and $g_{c,-D}(v)$ respectively.\\

	\section{Coloring-allowed invariants}

	First we will introduce some notions and illustrate several properties about the coloring number $u(c)$. We label each arc $\gamma$ in the knotoid diagram with an integer $\varphi(\gamma)$ so that an arc $\alpha$ that meets a crossing point and crosses to the left (with respect to the orientation of the arc that $\alpha$ meets) increases the label by 1 ; while an arc $\beta$ that meets a crossing point and crosses to the right (with respect to the orientation of the arc that $\beta$ meets) decreases the label by 1, as shown in Figure \ref{7}. In this way, we have a map $\varphi:A(D)\to \mathbb{Z}$, which is called the Cheng coloring of the knotoid diagram. One may refer to \cite{10}, \cite{7}, \cite{3}, \cite{4} and \cite{12} for more details.

	\begin{figure}[ht]
		\centering
		\includegraphics[width=0.42\textwidth]{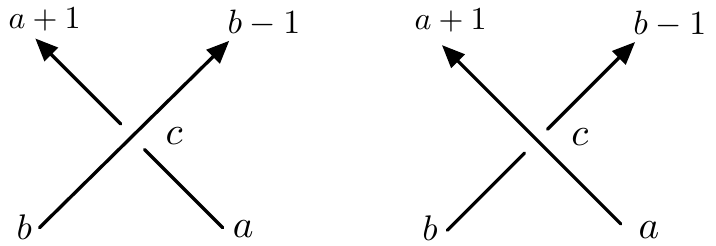}
		\caption{The coloring number}
		\label{7}
	\end{figure}

	We can start by choosing the arc $\gamma_0$ which contains the leg of the knotoid diagram to own an arbitrary integer label $\varphi(\gamma_0)$ , and then proceed along the diagram to label all the arcs via the rule introduced above to obtain a Cheng coloring of the diagram. Two different Cheng colorings of a knotoid diagram only differ by a constant which is an integer. 
	
	With a Cheng coloring of a knotoid diagram, we may define the coloring number of each crossing in it. For a crossing $c$ of a knotoid diagram $D$, take a neighbourhood of $c$ and rotate it such that the directions of the four arcs related to $c$ are obliquely upward, as shown in Figure \ref{7}. We may choose a Cheng coloring $\varphi$ of $D$ such that the labels of the four arcs around $c$ are the ones shown in Figure \ref{7}. Then the coloring number of $c$ is defined by 
	\begin{center}
		$u(c)=a-b$.
	\end{center}
	Due to the fact that two different Cheng colorings of a knotoid diagram only differ by a constant which is an integer, the coloring number $u(c)$ does not depend on the choice of the Cheng coloring, hence it is well-defined. Moreover, the coloring number is independent on the crossing type of the crossing, as shown in Figure \ref{7}.

	The coloring number is an effective tool to construct invariants of knotoids, since one of its advantages is that it behaves well at the crossings related to the $\Omega$-moves. Some properties of the coloring number $u(c)$ will be presented below.

	\begin{lemma}\label{Lemma 3.1.}
		(1) $u(c_0)=0$, where $c_0$ is the new crossing point generated by an $\Omega_1a$ or $\Omega_1 b$ move, and $u(c)$ preserves during this move for an arbitrary crossing $c$ which is not involved in it.\\
		(2) $u(c_2)=-u(c_1)$, where $c_1$ and $c_2$ are the crossings generated by an $\Omega_2 a$ move, and $u(c)$ preserves during an $\Omega_2 a$ move for an arbitrary crossing $c$ which is not involved in it.\\
		(3) $u_D(c_i)=u_{D'}(c_i)$, where $D$ and $D'$ are two oriented knotoid diagrams related by an $\Omega_3 a$ move, $i=1,2,3$, and $u(c)$ preserves during an $\Omega_3 a$ move for an arbitrary crossing $c$ other than $c_1$, $c_2$ and $c_3$.
	\end{lemma}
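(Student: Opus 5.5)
The plan is to argue locally, using only the Cheng coloring rule and the fact that a Reidemeister move changes the diagram inside a small disk. First I will record how the four labels around a crossing are related. Put a crossing $c$ in the standard position of Figure \ref{7}, with both strands pointing obliquely upward. By the Cheng coloring rule, each strand changes its label by $\pm1$ as it passes through $c$, and the sign depends only on whether it crosses the other strand from right to left or from left to right. This dependence is on the planar picture only, not on which strand is over. So if the incoming labels are $a$ on one strand and $b$ on the other, the outgoing labels are $a\pm1$ and $b\mp1$, and $u(c)=a-b$ is the difference of the labels of the two incoming arcs, taken in the fixed left/right order. The key consequence is this: a move supported in a disk $B$ leaves the labels of all arcs outside $B$ unchanged. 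We normalize both colorings to agree on the leg arc, and the total increment of labels along the strands inside $B$ is the same before and after the move. Hence every crossing outside $B$ keeps its four labels and therefore its coloring number. This disposes of the ``preserves'' clauses in (1)–(3).

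For (1), after an $\Omega_1a$ or $\Omega_1b$ move the new crossing $c_0$ is formed by a single strand meeting itself. The strand enters $c_0$ with label $a$, runs once around the small kink (meeting no other crossing), and returns to $c_0$ still carrying label $a$, up to the single increment incurred at $c_0$ itself. Reading off the two incoming arcs in the standard picture, I expect both to carry the same label, so $u(c_0)=0$. I will check this once for each kink orientation, left-turning and right-turning. To confirm that the net label change through the kink region is zero, I will note that the strand crosses left once and right once.

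For (2), in an $\Omega_2a$ move two parallel (co-oriented) strands with labels $x$ and $y$ create $c_1$ and then $c_2$. At $c_1$ the strands swap sides, and their labels change to $x\pm1$ and $y\mp1$. At $c_2$ they swap back, with the opposite increments, which restores $x$ and $y$ on exit. Computing $u$ at each crossing in standard position gives $u(c_1)=x-y$. At $c_2$ the left/right roles of the strands are exchanged, so I expect $u(c_2)=(y-1)-(x+1)+2$ or a similar expression that simplifies to $y-x=-u(c_1)$. The same normalization argument then gives preservation elsewhere.

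For (3), label the three incoming strands $x,y,z$ in an $\Omega_3a$ configuration. Each pair of strands meets exactly once both before and after the move, and in the same relative planar order. So the label a strand carries when it reaches its crossing with a given other strand is determined by the crossings it has already passed, and I will tabulate these. The main obstacle is this bookkeeping: for each $i$, I must verify that the incoming labels at $c_i$ in $D$ and in $D'$ coincide. I would handle this by writing the incoming labels at each $c_i$ explicitly in terms of $x,y,z$ for both diagrams. I will use the fact that in $\Omega_3a$ the strand order along each strand gets reversed, while the increments commute because they are additive. If a discrepancy appears, I would check whether the increments picked up before $c_i$ cancel in the difference $a-b$, since only differences matter.
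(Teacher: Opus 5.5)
Your overall strategy matches the paper's: the paper treats the lemma as evident from the labelled local pictures in Figure~\ref{8}, i.e.\ from tracking Cheng labels inside the disk of the move. Your argument for the ``preserves'' clauses is fine: each strand's net increment through the disk is unchanged, so after matching the leg labels every arc outside the disk keeps its label. Your plan for (3) is also fine.

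The gap is in how you read the coloring number. You take $u(c)=a-b$ to be the difference of the labels of the two \emph{incoming} arcs. With that normalization, (1) and (2) are false.

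\textbf{Part (1).} In a kink, the loop arc carries the outgoing label of the first pass through $c_0$ to the incoming arc of the second pass. That outgoing label is the first pass's incoming label shifted by the increment at $c_0$. So the two incoming labels at $c_0$ always differ by exactly $1$, and your formula gives $u(c_0)=\pm1$, with the same sign for both kink types. Your phrase ``up to the single increment incurred at $c_0$ itself'', followed by ``I expect both to carry the same label'', is exactly where this gets lost.

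\textbf{Part (2).} In $\Omega_2a$ the raw incoming differences give $u(c_1)+u(c_2)=\pm2$. The ``$+2$'' in your $(y-1)-(x+1)+2$ is that discrepancy inserted by hand; nothing in the convention you stated produces it.

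\textbf{The fix.} What is missing is the offset built into Figure~\ref{7}. The proof of Proposition~\ref{Proposition 3.1.} shows it: one leaves $c$ along an arc labelled $a+1$ and returns along an arc labelled $b+1$. So in standard position the strand crossing to the left carries $a\to a+1$, and the strand crossing to the right carries $b+1\to b$. Thus $u(c)=a-b$ pairs the incoming label of one strand with the outgoing label of the other. Equivalently, $u(c)$ is the incoming label of the left-crossing strand minus that of the right-crossing strand, plus $1$.

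With this normalization both parts go through:
\begin{itemize}
\item In either kink, the left-crossing strand enters $c_0$ with a label exactly one less than the right-crossing strand, so $u(c_0)=0$.
\item In $\Omega_2a$, let the strand entering $c_1$ from the lower left have label $x$ and the other have label $y$. Then $u(c_1)=y-x+1$ and $u(c_2)=(x-1)-(y+1)+1=x-y-1=-u(c_1)$.
\end{itemize}

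\textbf{Part (3).} This part does not see the constant, since it cancels in $u_{D'}(c_i)-u_D(c_i)$. Your tabulation will show that the two incoming labels at each $c_i$ are shifted by the same amount. That shift is $+1$ or $-1$ according as the triangle of the move lies to the right or to the left of the third strand in $D$. Hence $u(c_i)$ is unchanged.
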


	\begin{figure}[ht]
		\centering
		\includegraphics[width=\textwidth]{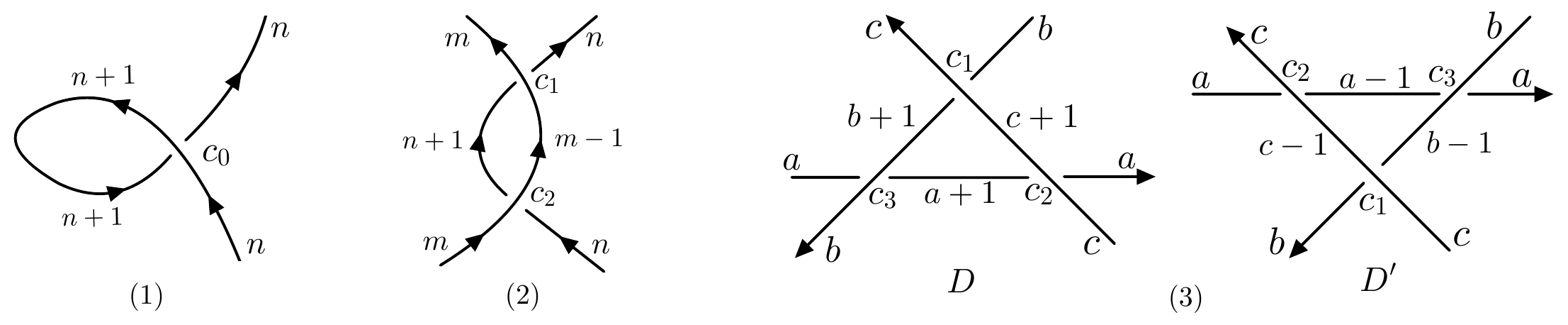}
		\caption{Labels around the crossings relevant to $\Omega$-moves}
		\label{8}
	\end{figure}

	\begin{lemma}\label{Lemma 3.2.} Let $D$ be a knotoid diagram, $-D$ be the inverse of $D$ and
		$D^*$ be the mirror image of $D$. For an arbitrary crossing $c$ in $D$, we have $u_{-D}(c)=-u_D(c)$ and 
		$u_{D^*}(c)=u_D(c)$.
	\end{lemma}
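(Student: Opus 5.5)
We argue directly from the local definition of $u(c)$ via a Cheng coloring, as in Figure \ref{7}. The first observation is that the Cheng labelling rule is \emph{local and independent of crossing type}: whether an arc passing through a crossing increases or decreases its label depends only on whether it crosses the other strand to the left or to the right, measured against the orientation of that other strand. Over/under information never enters. So for both $-D$ and $D^*$ the task is to see how reversing orientation, respectively switching all crossings, changes the labels of the four arcs at $c$, and then to re-read $u(c)=a-b$ in the standard position.

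\textbf{Mirror image.} $D^*$ has the same underlying oriented curve as $D$ with every crossing switched. Since the labelling rule ignores crossing information, any Cheng coloring $\varphi$ of $D$ is also a Cheng coloring of $D^*$. At a crossing $c$ we rotate the neighbourhood so that both strands point obliquely upward. This normalization uses only the oriented strands, so the four labels around $c$ occupy the same positions in $D$ and in $D^*$; only the over/under choice differs. Figure \ref{7} indicates that $u(c)=a-b$ is read off the same way for both crossing types. Hence $u_{D^*}(c)=u_D(c)$.

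\textbf{Inverse.} For $-D$ the curve is the same but every orientation is reversed. We claim that $-\varphi$ is a Cheng coloring of $-D$. Suppose a strand $\alpha$ passes through $c$, crossing the strand $\beta$. Reversing the orientation of $\beta$ interchanges its left and right sides. Reversing $\alpha$ swaps which of its two arcs at $c$ counts as "before" and which as "after". Each reversal turns an increase by $1$ into a decrease by $1$ or vice versa, so together they preserve the rule, and $\varphi$ itself remains a Cheng coloring of $-D$. Negation also preserves the rule, so $-\varphi$ does too, and either can be used.

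\textbf{Reading off $u(c)$ in $-D$.} Rotating $c$ in $-D$ so that both reversed strands point upward amounts to a rotation by $\pi$ of the original picture. The rotation maps the four arc positions of Figure \ref{7} to their opposite positions. The key step, and the main obstacle, is to check from the labelled configuration of Figure \ref{7} that under this rotation the label playing the role of $a$ minus the label playing the role of $b$ equals $b-a$. Concretely, we would write the four labels in $D$ as $a$, $a\pm1$, $b$, $b\pm1$ following the figure's pattern, and identify which arcs become the new $a$- and $b$-positions after the rotation by $\pi$ with orientations reversed. We expect the new difference to be $-(a-b)$, giving $u_{-D}(c)=-u_D(c)$. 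If the positions instead come out as $b$ and $a$ shifted by a common constant, the same conclusion follows, because $u$ is independent of the choice of Cheng coloring.
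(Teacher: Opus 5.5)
Your treatment of $D^*$ is fine: the Cheng rule and the ``both strands upward'' normalization see only the oriented underlying curve, and $u(c)$ is read off the same way for both crossing types. The inverse part, however, has a genuine gap.

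Your opening claim that $-\varphi$ is a Cheng coloring of $-D$ is correct, but the justification is not. First, $\varphi$ does \emph{not} remain a Cheng coloring of $-D$. Second, negation does not preserve the rule, since it turns every $+1$ step into a $-1$ step. Indeed, two Cheng colorings of one diagram differ by a constant, so $\varphi$ and $-\varphi$ cannot both color $-D$ once there is a crossing. Your count of sign changes misses one. Reversing the traversing strand $\alpha$ swaps ``before'' and ``after'' \emph{and} reverses the direction in which $\alpha$ passes from one side of $\beta$ to the other. These two effects cancel, and only the reversal of $\beta$ survives. So the local rule of $-D$ is the negative of that of $D$, and the Cheng colorings of $-D$ are exactly $-\varphi$ plus constants.

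This is exactly what decides the step you left at ``we expect''. In Figure \ref{7} (compare the proof of Proposition \ref{Proposition 3.1.}), one strand carries $a$ on its incoming arc and $a+1$ on its outgoing arc. The other carries $b+1$ incoming and $b$ outgoing. Reversing both strands and rotating by $\pi$ keeps each strand in its own pair of positions but exchanges its incoming and outgoing arcs. So the $a$-position is now occupied by the arc that carried $a+1$, and the $b$-position by the arc that carried $b+1$. Thus the rotation does \emph{not} swap the roles of $a$ and $b$. Read with $\varphi$, as you allow, it gives $(a+1)-(b+1)=u_D(c)$, which is the wrong answer. The sign comes entirely from the coloring: with $-\varphi$ the two positions read $-(a+1)$ and $-(b+1)$, whence $u_{-D}(c)=b-a=-u_D(c)$.

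This local check is all that the paper's Figure \ref{9} encodes. Alternatively, you can avoid pictures by combining Proposition \ref{Proposition 3.1.} (whose proof does not use this lemma) with $\sgn_{-D}(c)=\sgn_D(c)$, $d_{-D}(c)=-d_D(c)$ and $\sgn_{D^*}(c)=-\sgn_D(c)$, $d_{D^*}(c)=-d_D(c)$.
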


	\begin{figure}[ht]
		\centering
		\includegraphics[width=0.56\textwidth]{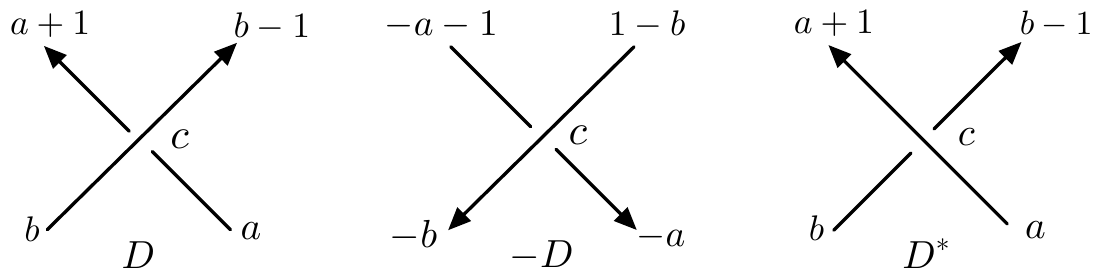}
		\caption{The crossing $c$ in $D$, $-D$ and $D^*$}
		\label{9}
	\end{figure}

	The conclusions of Lemma \ref{Lemma 3.1.} and Lemma \ref{Lemma 3.2.} are evident by Figure \ref{8} and Figure \ref{9}, hence a detailed proof is omitted here.

	A zero-height knotoid diagram $D$ is a knotoid diagram whose head and leg can be attached by an arc which does not intersect with the rest part of $D$.

	\begin{lemma}\label{Lemma 3.3.} If $D$ is a zero-height knotoid diagram, then $u(c)=0$ for each crossing $c$ in $D$. 
	\end{lemma}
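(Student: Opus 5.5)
The plan is to reduce the statement to the classical fact quoted in the introduction: Kauffman proved in \cite{12} that the weight, and hence the coloring number, of every crossing in a classical knot diagram is $0$. Since $D$ has zero height, there is an arc $\delta$ in $\mathbb{R}^2$ joining the head to the leg and meeting $D$ only at its endpoints. Closing $D$ along $\delta$ gives a classical knot diagram $K=D\cup\delta$. It has exactly the same crossings as $D$, and no new ones, because $\delta$ is disjoint from the rest of $D$.

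The first step is to compare Cheng colorings of $D$ and $K$. The arcs of $K$ are the arcs of $D$, except that the arc $\gamma_0$ containing the leg and the arc $\gamma_1$ containing the head are fused, together with $\delta$, into a single arc of $K$. A Cheng coloring of $K$ is an integer labeling satisfying the local rule of Figure \ref{7} at every crossing. These local rules are the same as those for $D$. So any Cheng coloring $\psi$ of $K$ restricts to a labeling of $A(D)$ that satisfies the rule at every crossing of $D$, and is therefore a Cheng coloring $\varphi$ of $D$ (with $\varphi(\gamma_0)=\varphi(\gamma_1)$). Since $u(c)=a-b$ is computed from the labels of the four arcs around $c$, and $\varphi$ and $\psi$ agree there, $u_D(c)=u_K(c)$ for every crossing $c$.

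The second step is to show that $K$ actually admits a Cheng coloring, and that its coloring numbers vanish. This is the content of Kauffman's result. For a self-contained route, I would argue as follows. Going once around $K$, the label changes by the total of the $\pm1$ contributions recorded at all crossings. This total is the sum over crossings of $\sgn$-weighted endpoint contributions, and it cancels in pairs, so the coloring closes up consistently on $K$. In the same way, $u(c)$ equals, up to sign, the weight $d(c)$: the signed count of endpoints lying on one side of the chord $c$ in the Gauss diagram. In the closed diagram $K$, that signed count is the algebraic intersection of the two loops obtained by smoothing $K$ at $c$. Two closed curves in the plane have zero algebraic intersection number, so $u_K(c)=0$.

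I expect the main obstacle to be this second step, specifically the identification $u_K(c)=\pm d_K(c)$ and the planar intersection argument. Lemmas \ref{Lemma 3.1.} and \ref{Lemma 3.2.} do not cover it. If we are allowed to cite \cite{12} directly, that step is immediate, and the remaining work is only the routine observation in the first step that closing along $\delta$ creates no crossings and leaves the local labels around each crossing unchanged.
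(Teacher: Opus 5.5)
Your proposal is correct and follows the same route as the paper: close $D$ along a head-to-leg arc disjoint from the rest of the diagram, note that this changes no coloring numbers, and invoke Kauffman's result from \cite{12} that every crossing of a classical knot diagram has coloring number $0$. Your extra details are sound. These are that a Cheng coloring of the closure restricts to one of $D$, and the optional self-contained proof via the vanishing algebraic intersection number of the two loops obtained by smoothing at $c$. The paper leaves both of these implicit.
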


	\begin{proof}
	We may attach the head and the leg of $D$ by an arc which does not intersect with the rest part of $D$, so that we obtain a classical knot diagram $\overline{D}$. It is clear that the coloring numbers of the crossings in $\overline{D}$ are the same as the corresponding ones in $D$. Then the conclusion holds by a result of \cite{12} that the coloring number of the crossings in a classical knot diagram is 0.
	\end{proof}

	Analogous to a result in \cite{10}, Proposition \ref{Proposition 3.1.} reveals the relationship between the coloring number $u(c)$ and the degree $d(c)$.

	\begin{proposition}\label{Proposition 3.1.} Let $D$ be a planar knotoid diagram, then $u(c)=d(c)\sgn(c)$ for each crossing $c$ in $D$.
	\end{proposition}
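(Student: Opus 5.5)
Fix a crossing $c$ of $D$ and choose a Cheng coloring $\varphi$. The crossing $c$ cuts the knotoid at its two preimages $a<b$. This splits the curve into three pieces: the initial segment $[0,a]$, the loop $\ell_c=f([a,b])$ starting and ending at $c$, and the terminal segment $[b,1]$. In the Gauss diagram, the left-hand side of the chord $c$ corresponds to exactly one of these pieces. I will assume, following the convention of Figure~\ref{5}, that it is the loop $[a,b]$, with the roles swapped when the chord points the other way. The plan is to compute $u(c)=a-b$ as the net change of the label $\varphi$ as one travels along the loop $\ell_c$ from the outgoing arc at $c$ back to the incoming arc at $c$. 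Then I compare that net change with $d(c)$, which is a signed count of chord endpoints on that side.

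\textbf{Step 1 (local bookkeeping).} Whenever our strand passes a crossing $e$ as an \emph{under}-arc, the label jumps by $\pm1$ according to the rule of Figure~\ref{7}. Passing as an over-arc leaves the label unchanged. I will check the four cases (over/under, $\sgn(e)=\pm1$) against Figure~\ref{1} to show that the jump at an endpoint of chord $e$ equals the sign marked at that endpoint, up to a global sign convention $\epsilon$. Concretely, a starting point of the chord is an over-passage and contributes $0$ to the label change. It nevertheless carries the mark $\sgn(e)$, so I need a more careful formulation. The cleanest choice is to use the rule in its form ``crossing to the left increases the label by $1$'' applied to the arc that passes \emph{under}. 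This says that the label change along a strand at an under-passage is $\pm\sgn(e)$.

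Summing over the loop, I get
\[
\varphi(\text{end of } \ell_c)-\varphi(\text{start of } \ell_c)=\epsilon\sum_{p}(\text{mark at }p),
\]
where $p$ runs over the ending points lying in the loop.

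\textbf{Step 2 (discard chords internal to the loop).} A chord $e$ with both endpoints inside the loop contributes $\sgn(e)+(-\sgn(e))=0$ to $d(c)$. For the coloring side, I will show instead that crossings of the loop with itself cancel. The label change along $\ell_c$ equals the one obtained in the closed curve $\ell_c$, smoothed at $c$. In a closed curve, a Cheng coloring is consistent (the total change around it is $0$, the classical fact from \cite{12} used in Lemma~\ref{Lemma 3.3.}). Hence self-crossings of $\ell_c$ contribute $0$ in total, so only the chords $e$ crossing $c$ in $G(D)$ matter. Each such $e$ has exactly one endpoint in the loop. If that endpoint is an end (under) point, the chord points into the loop side; if it is a start point, it points out. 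Collecting terms gives exactly $\pm\bigl(|r^+(c)|-|r^-(c)|-|l^+(c)|+|l^-(c)|\bigr)=\pm d(c)$.

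\textbf{Step 3 (the sign of $c$).} Finally, I must relate the labels $a,b$ of Figure~\ref{7} to the start and end of the loop. The picture at $c$ is rotated so both strands point upward. Which of the two strands is traversed first, and whether the first strand is over or under, together determine $\sgn(c)$. I expect this to give $u(c)=\sgn(c)\cdot(\text{net change along }\ell_c)$, i.e.\ $u(c)=d(c)\sgn(c)$. This is consistent with Lemma~\ref{Lemma 3.1.}(1), where an isolated chord gives $d=0=u$, and with Lemma~\ref{Lemma 3.2.}: mirroring flips $\sgn$ and flips every mark, hence $d$, while $u$ is unchanged.

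The main obstacle is the sign bookkeeping in Steps 1 and 3: matching Figure~\ref{7}'s convention against Figure~\ref{1} and the chord orientation. I would handle it by first verifying the formula on the simplest nontrivial example, a single chord $c$ crossed by one chord $e$ (a two-crossing knotoid), to fix $\epsilon$ and the side convention. The general statement then follows by the additivity established above.
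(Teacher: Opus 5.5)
Your plan is the paper's: read off $u(c)$ as the change of the Cheng label along the loop from $c$ back to $c$, and compare it with the marks on the loop side of the chord $c$. But Step~1 misdescribes the Cheng coloring, and the rest does not survive the error. The labelled arcs are the components of $D\setminus C(D)$, so the label changes at \emph{every} passage through a crossing. At each crossing one strand crosses to the left of the other and the other crosses to the right, so both strands change, by opposite amounts, whichever one is over; this is why $u(c)$ does not depend on the crossing type. The correct local fact, which is the key observation of the paper's proof, is the one you first stated and then dropped. Passing through $e$ changes the label by minus the mark at the endpoint being passed: by $-\sgn(e)$ at the starting (over) point and by $+\sgn(e)$ at the ending (under) point, up to one global sign. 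With your under-only rule the sum runs over ending points only, and it is not $\pm d(c)$. Chords crossing $c$ whose starting point lies in the loop contribute nothing, while the self-crossings of the loop contribute $\pm$ its writhe. Your Step~2 repair contradicts Step~1: under an under-only rule the total change around a closed curve is $\pm$ its writhe (e.g.\ $\pm3$ for the standard trefoil diagram), not $0$. Your ``collecting terms'' also counts the starting points that Step~1 said contribute $0$. (The fact from \cite{12} used in Lemma~\ref{Lemma 3.3.} is the vanishing of coloring numbers in classical diagrams, not the consistency of the coloring.) With the corrected rule no global argument is needed. A chord with both endpoints in the loop contributes $\sgn(e)+(-\sgn(e))=0$ on the spot. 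The net change along the loop is then minus the sum of the marks on the loop side, which is $-d(c)$ or $+d(c)$ according to which preimage of $c$ comes first, since that decides whether the loop side is the left side of the chord $c$.

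Step~3 is only anticipated, and the identity you expect, $u(c)=\sgn(c)\cdot(\text{net change along }\ell_c)$, cannot hold in general. For a fixed value of $\sgn(c)$ both orders of the over- and under-preimages occur, and the net change is $+d(c)$ for one order and $-d(c)$ for the other. The correct bookkeeping has two independent signs. Leaving $c$ along the outgoing arc of the first strand and returning along the incoming arc of the second gives net change $a-b$ or $b-a$, depending on which of the two strands in the rotated picture of Figure~\ref{7} is traversed first. Separately, the relation of the net change to $d(c)$ depends on whether the over- or the under-passage comes first. Only the product of these two signs equals $\sgn(c)$. A single two-crossing example fixes only one of the four local configurations. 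You must either run through all four, as in cases (1)--(4) of the paper, or reduce to one of them using Lemma~\ref{Lemma 3.2.} together with $d_{-D}=-d_D$ and $d_{D^*}=-d_D$. Inversion reverses the order of the two preimages of $c$ and keeps $\sgn(c)$; taking the mirror image reverses the order and flips $\sgn(c)$.
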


	\begin{proof} 
	The planar knotoid diagram $D$ can be regarded as the image of an immersion $f:I=[0,1]\to \mathbb{R}^2$. For an arbitrary fixed crossing $c$ of $D$, suppose that $f^{-1}(\left\lbrace c\right\rbrace )=\left\lbrace t_0, t_1\right\rbrace $, where $t_0$ corresponds to the underpassing arc and $t_1$ corresponds to the overpassing arc. Let $p=\min\left\lbrace t_0,t_1\right\rbrace $ and $q=\max\left\lbrace t_0,t_1\right\rbrace $. For an arbitrary crossing $e$ that we meet during a journey from $c$ to itself along the knotoid diagram (our trace is the loop $f([p,q])$, since $f(p)=f(q)=c$), there are 4 possibilities, and we have listed them all in Figure \ref{10}. In these cases, we can see that if the purple arc in the knotoid diagram contributes $\varepsilon$ to the Cheng coloring when it passes the crossing $e$, then the sign of the corresponding endpoint of the purple chord $e$ in the Gauss diagram is $-\varepsilon$, where $\varepsilon\in \left\lbrace -1,1\right\rbrace $. This observation indicates that the difference between the Cheng-colorings at the end and start of our journey is exactly the opposite number of the sum of the chord endpoints on the Gauss diagram that we have met. 
	
	\begin{figure}[ht]
		\centering
		\includegraphics[width=0.64\textwidth]{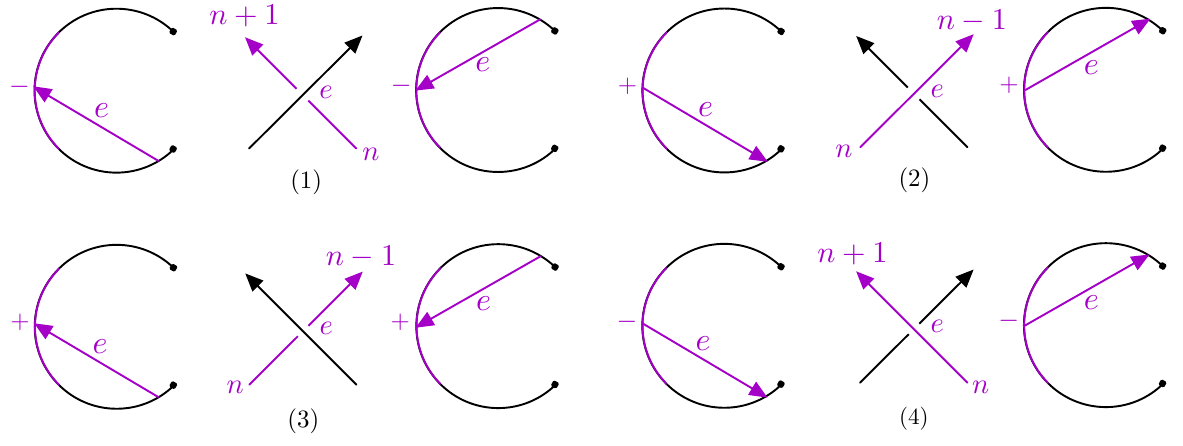}
		\caption{4 possibilities of $e$}
		\label{10}
	\end{figure}

	For the fixed crossing $c$, there are also 4 possibilities, as shown in Figure \ref{11}. In case (1), $\sgn(c)=1$ and $t_0<t_1$, which implies that in the journey "based" at $c$, we leave $c$ along the arc whose Cheng coloring is $a+1$ and return to $c$ along the arc whose Cheng coloring is $b+1$. Hence by the observation above, $(b+1)-(a+1)=-d(c)$, therefore
	\begin{center}
		$u(c)=a-b=d(c)=d(c)\sgn(c)$.
	\end{center}
	Similarly, in case (2), $\sgn(c)=1$ and $t_1<t_0$, thus 
	\begin{center}
		$u(c)=a-b=-(-d(c))=d(c)\sgn(c)$. 
	\end{center}
	In case (3), $\sgn(c)=-1$ and $t_0<t_1$, consequently
	\begin{center}
		$u(c)=a-b=-d(c)=d(c)\sgn(c)$. 
	\end{center}
	In case (4), $\sgn(c)=-1$ and $t_1<t_0$, hence $(b+1)-(a+1)=-(-d(c))$. As a result,
	\begin{center}
		$u(c)=a-b=-d(c)=d(c)\sgn(c)$. 
	\end{center}
	This completes the proof of Proposition 3.1.
	\end{proof}

	\begin{figure}[ht]
		\centering
		\includegraphics[width=0.64\textwidth]{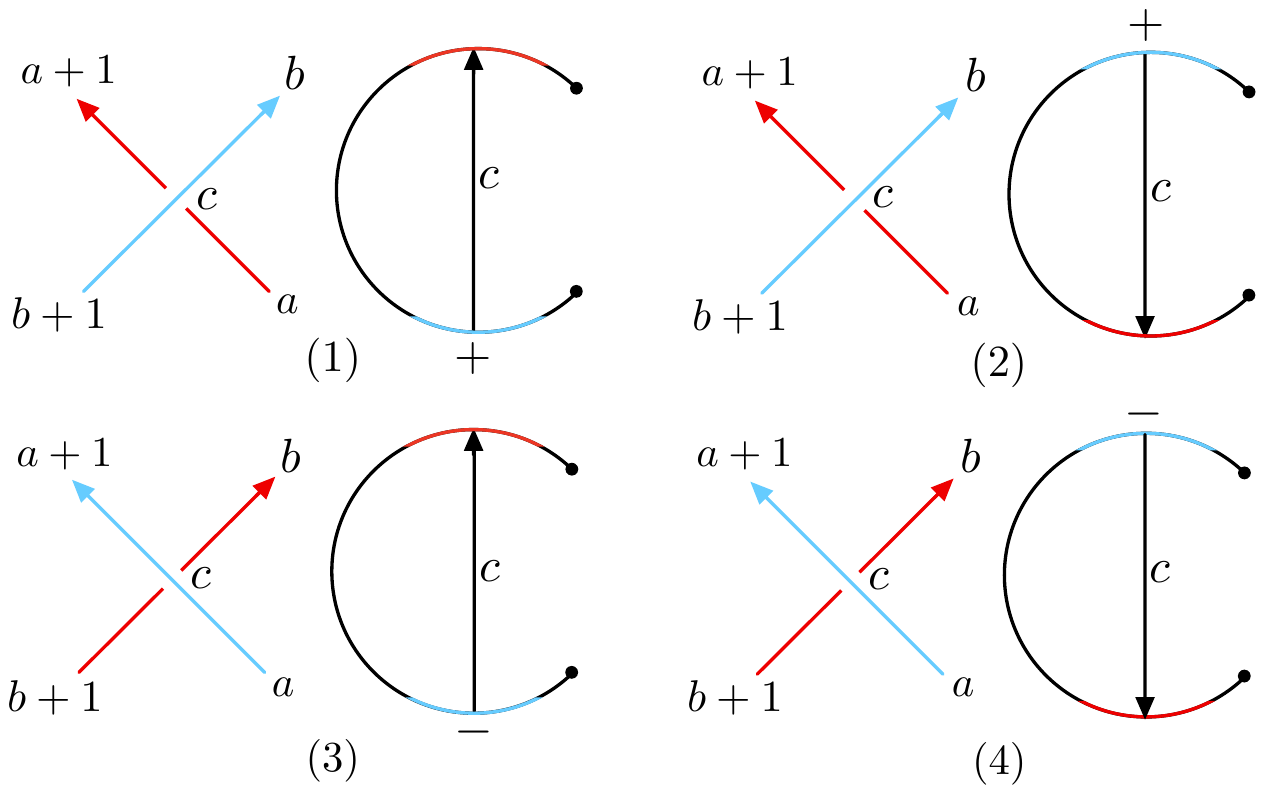}
		\caption{4 possibilities of $c$}
		\label{11}
	\end{figure}

	\begin{proposition}\label{Proposition 3.2.} Let $D$ be a planar knotoid diagram, then
		\begin{center}
			$\sum\limits_{c\in C(D)}u(c)=0$.\\
		\end{center}
	\end{proposition}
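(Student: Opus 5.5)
By Proposition \ref{Proposition 3.1.}, $u(c)=d(c)\sgn(c)$ for every crossing, so the statement reduces to
\begin{center}
$\sum\limits_{c\in C(D)}\sgn(c)d(c)=0.$
\end{center}
I will prove this purely combinatorially on the Gauss diagram $G(D)$, by expanding each $d(c)$ as a sum over the chords that cross $c$ and then pairing terms.

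Recall that $d(c)=|r^+(c)|-|r^-(c)|-|l^+(c)|+|l^-(c)|$. This can be rewritten as $d(c)=\sum_{e\pitchfork c}\epsilon(e,c)\sgn(e)$, where $e\pitchfork c$ ranges over chords intersecting $c$. Here $\epsilon(e,c)=1$ if $e$ points to the right of $c$ and $\epsilon(e,c)=-1$ if it points to the left. Then
\begin{center}
$\sum\limits_{c}\sgn(c)d(c)=\sum\limits_{\{c,e\},\,c\pitchfork e}\sgn(c)\sgn(e)\bigl(\epsilon(e,c)+\epsilon(c,e)\bigr),$
\end{center}
where the sum runs over unordered pairs of intersecting chords. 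It therefore suffices to show that $\epsilon(e,c)=-\epsilon(c,e)$ for every pair of intersecting chords. That is, if $e$ points to the right of $c$, then $c$ points to the left of $e$.

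This antisymmetry is the key step and the one to check carefully, since it depends on the paper's convention for "left/right hand side" of an oriented chord drawn inside the (disk bounded by the) arc $G(D)$. It is essentially a planar orientation fact. Two oriented chords crossing transversally at a point have a well-defined intersection sign given by the ordered pair of direction vectors. "$e$ points to the right of $c$" means $(c,e)$ is a negatively oriented frame, and the frame $(e,c)$ then has the opposite orientation, so $c$ points to the left of $e$. Here I will need the left/right side of a chord to be defined via the orientation of the chord in the plane of the diagram, as Figure \ref{5} suggests. This is legitimate because each side of $c$ is a region of the disk cut by $c$, and the side containing the head of $e$ is determined by the local crossing orientation.

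With the antisymmetry in place, each unordered pair contributes $0$, and the total sum vanishes, proving the proposition. As a sanity check, I would verify the convention against the identity $g_c(1)=d(c)$ derived earlier and against a small example with two crossing chords.
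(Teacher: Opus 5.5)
Your proof is correct, and it takes a different route from the paper's.

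\textbf{The paper's route.} The paper argues by induction via forbidden moves. It deletes a chord $c_0$ adjacent to an end of the arc and checks that $d(c)$ shifts by $\pm\sgn(c_0)$ for each chord $c$ crossing $c_0$, while all other degrees are unchanged. The total change in $\sum_{c}u(c)$ over those chords is then $-\sgn(c_0)d(c_0)=-u(c_0)$, so the sum is preserved. Finally it uses that forbidden moves reduce any diagram to the crossingless one.

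\textbf{Your route.} You double count over all intersecting pairs at once. Each unordered pair contributes $\sgn(c)\sgn(e)\bigl(\epsilon(e,c)+\epsilon(c,e)\bigr)$, which vanishes by antisymmetry of ``points to the right of.''

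\textbf{Relation between them.} That antisymmetry is exactly what the paper uses implicitly when it says a chord $c\in r^{+}(c_0)$ has its degree changed by $-\sgn(c_0)$: this requires $c_0\in l(c)$. So both proofs rest on the same fact. The paper applies it one chord at a time inside an induction; you apply it globally.

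\textbf{What each buys.} Your version is shorter. It needs no reduction to the trivial diagram and no bookkeeping of which degrees change. It also shows that $\sum_c\sgn(c)d(c)=0$ for every signed oriented chord diagram on an arc, realizable or not; planarity enters only through Proposition 3.1. The paper's version instead records how $d$ and $u$ respond to a forbidden move, which fits its emphasis on local moves.

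\textbf{The step you flagged.} The antisymmetry does hold, and it can also be checked without frames. Let $c$ run from $P$ to $Q$ and $e$ from $R$ to $S$, with counterclockwise cyclic order $P,R,Q,S$. The left side of $c$ is the arc from $Q$ counterclockwise to $P$, so $e$ points to the left of $c$. The left side of $e$ is the arc from $S$ counterclockwise to $R$, which contains $P$ but not $Q$, so $c$ points to the right of $e$.
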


    \begin{proof}
	Let $D_1$ and $D_2$ be 2 planar knotoid diagrams which differ by a forbidden move. Suppose that $c_0$ is the crossing generated by the forbidden move. Without loss of generality, we may assume $c_0\in D_1$. Then the degree of a chord which passes through $c_0$ adds 1 or subtracts 1, depending on its direction and the sign of $c_0$. Consequently,
	\begin{center}
		$
		\begin{aligned}
			\sum\limits_{c\in r^+(c_0)}d_{D_1}(c)-\sum\limits_{c\in r^+(c_0)}d_{D_2}(c)&=\sum\limits_{c\in r^+(c_0)}(-\sgn(c_0))=-\sgn(c_0)|r^+(c_0)|,\\
			\sum\limits_{c\in r^-(c_0)}d_{D_1}(c)-\sum\limits_{c\in r^-(c_0)}d_{D_2}(c)&=\sum\limits_{c\in r^-(c_0)}(-\sgn(c_0))=-\sgn(c_0)|r^-(c_0)|,\\
			\sum\limits_{c\in l^+(c_0)}d_{D_1}(c)-\sum\limits_{c\in l^+(c_0)}d_{D_2}(c)&=\sum\limits_{c\in l^+(c_0)}\sgn(c_0)=\sgn(c_0)|l^+(c_0)|,\\
			\sum\limits_{c\in
				l^-(c_0)}d_{D_1}(c)-\sum\limits_{c\in l^-(c_0)}d_{D_2}(c)&=\sum\limits_{c\in l^-(c_0)}\sgn(c_0)=\sgn(c_0)|l^-(c_0)|.\\
		\end{aligned}
		$
	\end{center}

	\begin{figure}[ht]
		\centering
		\includegraphics[width=0.24\textwidth]{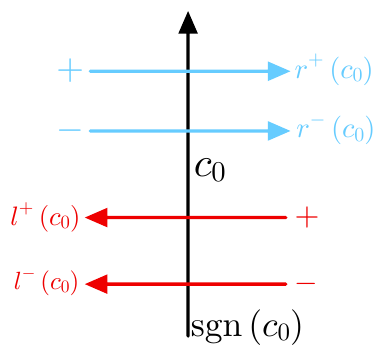}
		\caption{$c_0$ in $G(D_1)$}
		\label{12}
	\end{figure}

	Moreover, by Proposition \ref{Proposition 3.1.},
	\begin{center}
		$
		\begin{aligned}
			\sum\limits_{c\in r^+(c_0)}u_{D_1}(c)-\sum\limits_{c\in r^+(c_0)}u_{D_2}(c)&=-\sgn(c_0)|r^+(c_0)|,\\
			\sum\limits_{c\in r^-(c_0)}u_{D_1}(c)-\sum\limits_{c\in r^-(c_0)}u_{D_2}(c)&=\sgn(c_0)|r^-(c_0)|,\\
			\sum\limits_{c\in l^+(c_0)}u_{D_1}(c)-\sum\limits_{c\in l^+(c_0)}u_{D_2}(c)&=\sgn(c_0)|l^+(c_0)|,\\
			\sum\limits_{c\in l^-(c_0)}u_{D_1}(c)-\sum\limits_{c\in l^-(c_0)}u_{D_2}(c)&=-\sgn(c_0)|l^-(c_0)|.\\
		\end{aligned}
		$
	\end{center}

	It is evident that $u_{D_1}(c)=u_{D_2}(c)$ for a chord $c$ which does not cross $c_0$. As a result,
	\begin{center}
		$
		\begin{aligned}
			&\sum\limits_{c\in C(D_1)}u_{D_1}(c)-\sum\limits_{c\in C(D_2)}u_{D_2}(c)-u(c_0)\\
			=&\sum\limits_{c\in C(D_2)}u_{D_1}(c)-\sum\limits_{c\in C(D_2)}u_{D_2}(c) \\
			=&\left( \sum\limits_{c\in r^+(c_0)}u_{D_1}(c)-\sum\limits_{c\in r^+(c_0)}u_{D_2}(c)\right)+\left(\sum\limits_{c\in r^-(c_0)}u_{D_1}(c)-\sum\limits_{c\in r^-(c_0)}u_{D_2}(c)\right) \\
			+&\left( \sum\limits_{c\in l^+(c_0)}u_{D_1}(c)-\sum\limits_{c\in l^+(c_0)}u_{D_2}(c)\right) +\left( \sum\limits_{c\in l^-(c_0)}u_{D_1}(c)-\sum\limits_{c\in l^-(c_0)}u_{D_2}(c)\right) \\
			=&-\sgn(c_0)|r^+(c_0)|+\sgn(c_0)|r^-(c_0)|+\sgn(c_0)|l^+(c_0)|-\sgn(c_0)|l^-(c_0)|\\
			=&-\sgn(c_0)\left(|r^+(c_0)|-|r^-(c_0)|-|l^+(c_0)|+|l^-(c_0)|\right)\\
			=&-\sgn(c_0)d(c_0)\\
			=&-u(c_0).    
		\end{aligned}
		$
	\end{center}
	Hence
	\begin{center}
		$\sum\limits_{c\in C(D_1)}u_{D_1}(c)=\sum\limits_{c\in C(D_2)}u_{D_2}(c)$.
	\end{center}

	Then the conclusion holds obviously by the formula above, since $D$ can be deformed into a trivial knotoid diagram by a finite sequence of forbidden moves, and there is no crossing in a trivial knotoid diagram. This completes the proof of Proposition 3.2.
	\end{proof}

	Our aim is to construct invariants of knotoids of the form 
	\begin{center}
		$\sum\limits_{c\in C(D)} u(c)f_c(x_1,\cdots,x_n)$, 
	\end{center}
	where $f_c(x_1,\cdots,x_n)$ is a function defined at each crossing $c\in C(D)$. The definition given below can be used in this kind of invariants to construct more invariants.\\

	\begin{definition}
	\label{Definition 3.1.} \textbf{(Coloring-allowed function)}  Let $D$ be a knotoid diagram. Suppose that $f_c(x_1,\cdots,x_n)$ is an $n$-variable function defined at each crossing $c\in C(D)$, $n\in \mathbb{N}$. Then the function $f_c(x_1,\cdots,x_n)$ is called a \textbf{coloring-allowed function} if it satisfies the conditions below:\\
	(1)For an arbitrary $\Omega_2 a$ move operated on $D$,
	\begin{center}
		$f_{c_1}(x_1,\cdots,x_n)=f_{c_2}(x_1,\cdots,x_n)$,  
	\end{center}
	where $c_1$ and $c_2$ are the crossings generated by the $\Omega_2 a$ move;\\
	(2)For an arbitrary $\Omega_3 a$ move operated on $D$,
	\begin{center}
		$f_{c_i,D}(x_1,\cdots,x_n)=f_{c_i,D'}(x_1,\cdots,x_n)$ 
	\end{center}
	for $i=1,2,3$, where $c_1$, $c_2$ and $c_3$ are the crossings related to the $\Omega_3 a$ move, $D'$ is the knotoid diagram obtained by $D$ after the $\Omega_3 a$ move;\\
	(3)$f_c(x_1,\cdots,x_n)$ preserves for an arbitrary crossing $c$ which is not involved in an $\Omega$-move.
	\end{definition}

    With the definition of coloring-allowed functions, we can define the coloring-allowed invariants of knotoids.

	\begin{theorem}\label{Theorem 3.1.} Let $D$ be a planar knotoid diagram, $f_c(x_1,\cdots,x_n)$ be a coloring-allowed function, then 
		\begin{center}
			$\mathcal{A}_D(x_1,\cdots,x_n)=\sum\limits_{c\in C(D)} u(c)f_c(x_1,\cdots,x_n)$
		\end{center}
		is an invariant of $D$. Such an invariant is called a \textbf{coloring-allowed invariant} of knotoids.
	\end{theorem}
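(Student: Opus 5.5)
We prove invariance under each oriented Reidemeister move separately. Planar isotopy changes neither the Gauss diagram nor the Cheng coloring, so $\mathcal{A}_D$ is clearly unchanged by it. Every oriented Reidemeister move can be written as a composition of the moves $\Omega_1a$, $\Omega_1b$, $\Omega_2a$, $\Omega_3a$ (together with other $\Omega_2$ variants); this is the standard generating-set result for oriented moves, and it carries over verbatim to planar knotoids because all the moves are local and happen away from the endpoints. So it suffices to check these generating moves. If the generating set we cite also needs the move $\Omega_2b$, we will treat it exactly like $\Omega_2a$; in that case we would have to add the corresponding equality of $f$ to Definition \ref{Definition 3.1.}, or derive it from the other moves.

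For each move from $D$ to $D'$, split $C(D)$ into the crossings not involved in the move and the crossings involved in it. For an uninvolved crossing $c$, Lemma \ref{Lemma 3.1.} gives $u_D(c)=u_{D'}(c)$, and condition (3) of Definition \ref{Definition 3.1.} gives $f_{c,D}=f_{c,D'}$. Hence these terms of the two sums agree term by term, and only the involved crossings need attention.

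\textbf{$\Omega_1$ moves.} The new crossing $c_0$ contributes $u(c_0)f_{c_0}=0$, since $u(c_0)=0$ by Lemma \ref{Lemma 3.1.}(1). No condition on $f_{c_0}$ is needed, and this is exactly why the definition imposes no $\Omega_1$ requirement.

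\textbf{$\Omega_2a$ move.} The two new crossings contribute
\[
u(c_1)f_{c_1}+u(c_2)f_{c_2}=u(c_1)\bigl(f_{c_1}-f_{c_2}\bigr)=0,
\]
using $u(c_2)=-u(c_1)$ from Lemma \ref{Lemma 3.1.}(2) and $f_{c_1}=f_{c_2}$ from condition (1).

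\textbf{$\Omega_3a$ move.} The three crossings $c_1,c_2,c_3$ persist, and each term $u(c_i)f_{c_i}$ is unchanged by Lemma \ref{Lemma 3.1.}(3) and condition (2).

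Summing over all crossings then gives $\mathcal{A}_D=\mathcal{A}_{D'}$ for each generating move.

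The main obstacle is the reduction to the generating set. We must make sure that the moves $\Omega_1a$, $\Omega_1b$, $\Omega_2a$, $\Omega_3a$, possibly with another $\Omega_2$ variant, generate all oriented moves on planar knotoid diagrams. We must also make sure that the intermediate moves in those decompositions are not forbidden moves. Both hold because the standard decompositions (Polyak's argument) are local and take place inside a disk that does not contain the endpoints. We would cite this result rather than reprove it.
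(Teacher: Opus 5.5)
Your proposal is correct and is essentially the paper's proof: reduce to Polyak's generating set $\Omega_1a$, $\Omega_1b$, $\Omega_2a$, $\Omega_3a$. Then the $\Omega_1$ term vanishes via $u(c_0)=0$, the $\Omega_2a$ pair cancels via $u(c_2)=-u(c_1)$ and condition (1), the $\Omega_3a$ terms match via Lemma~\ref{Lemma 3.1.}(3) and condition (2), and condition (3) handles all uninvolved crossings. Your hedge about $\Omega_2b$ is unnecessary, since Polyak's minimal generating set consists of exactly these four moves, and that is the result the paper cites.
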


	\begin{proof}
	In \cite{2}, Polyak proved that all oriented versions of Reidemeister moves for knot and link diagrams can be generated by a set consisting of only 4 oriented Reidemeister moves. Hence it suffices to show that $\mathcal{A}_D(x_1,\cdots,x_n)$ is invariant under the oriented Reidemeister moves $\Omega_1 a$, $\Omega_1b$, 
	$\Omega_2 a$ and $\Omega_3 a$ of planar knotoid diagrams.
	
	Suppose that $D$ and $D_1$ are two oriented knotoid diagrams which are related by an $\Omega_1a$ or $\Omega_1b$ move, and $c_0$ is the new crossing in $D_1$ generated by this move. By Lemma \ref{Lemma 3.1.}(1) and Definition \ref{Definition 3.1.}(3), we have
	\begin{center}
		$\mathcal{A}_{D_1}(x_1,\cdots,x_n)-\mathcal{A}_{D}(x_1,\cdots,x_n)=u(c_0)f_{c_0}(x_1,\cdots,x_n)=0$.
	\end{center}
	Hence $\mathcal{A}_D(x_1,\cdots,x_n)$ remains invariant under $\Omega_1 a$ and $\Omega_1b$ moves.

	Suppose that $D$ and $D_2$ are two oriented knotoid diagrams which are related by an $\Omega_2a$ move,  and $c_1$, $c_2$ are the new crossings in $D_2$ generated by this move. By Lemma \ref{Lemma 3.1.}(2) and Definition \ref{Definition 3.1.}(1)(3),
	\begin{center}
		$
		\begin{aligned}
			&\mathcal{A}_{D_2}(x_1,\cdots,x_n)-\mathcal{A}_{D}(x_1,\cdots,x_n)=u(c_1)f_{c_1}(x_1,\cdots,x_n)+u(c_2)f_{c_2}(x_1,\cdots,x_n)=0.
		\end{aligned}
		$
	\end{center}
	Thus $\mathcal{A}_D(x_1,\cdots,x_n)$ remains invariant under $\Omega_2 a$ moves.

	Suppose that $D$ and $D_3$ are two oriented knotoid diagrams which are related by an $\Omega_3a$ move,  $c_1$, $c_2$ and $c_3$ are the crossings related to this move. It follows from Lemma \ref{Lemma 3.1.}(3) and Definition \ref{Definition 3.1.}(2)(3) that
	\begin{center}
		$
		\begin{aligned}
			&\mathcal{A}_{D_3}(x_1,\cdots,x_n)-\mathcal{A}_{D}(x_1,\cdots,x_n)
			=\displaystyle \sum\limits_{i=1}^3  u_{D_3}(c_i)f_{c_i,D_3}(x_1,\cdots,x_n)-\displaystyle \sum\limits_{i=1}^3  u_D(c_i)f_{c_i,D}(x_1,\cdots,x_n)=0.
		\end{aligned}
		$
	\end{center}
	Therefore $\mathcal{A}_D(x_1,\cdots,x_n)$ remains invariant under $\Omega_3 a$ moves.
	\end{proof}

	Several examples of coloring-allowed invarants will be given below.

	\begin{example}\label{Example 3.1.} Let $f_c^F(x)=x^{d(c)}$, and 
	\begin{center}
		$\mathcal{A}_D^F(x)=\sum\limits_{c\in C(D)} u(c)x^{d(c)}$.
	\end{center} 
	It can be proved by Lemma \ref{Lemma 3.1.} and Proposition \ref{Proposition 3.1.} that $f_c^F$ is a coloring-allowed function, which gives the invariance of $\mathcal{A}_D^F$.
	\end{example}

	\begin{example}\label{Example 3.2.} Let $f_c^G(x)=x^{i(c)}$, and 
	\begin{center}
		$\mathcal{A}_D^G(x)=\sum\limits_{c\in C(D)} u(c)x^{i(c)}$. 
	\end{center}
	One can easily verify that $f_c^G$ is coloring-allowed, thus $\mathcal{A}_D^G$ is an invariant of $D$.
	\end{example}

	\begin{example}\label{Example 3.3.} For a knotoid diagram $D$ and a fixed chord $c\in C(D)$, let 
	\begin{center}
		$C_n(D,c)=\left\lbrace e\in C(D)\mid e\cap c\neq\varnothing,\gcd(d(c),d(e))=n\right\rbrace $,
	\end{center}
	where $e\cap c$ is the intersection of the chords $e$ and $c$ in the Gauss diagram $G(D)$, and $\gcd(a,b)$ denotes the greatest common divisor of two integers $a$ and $b$. The set $C_n(D,c)$ can be decomposed into two subsets:
	\begin{center}
		$l^n(c)=l(c)\cap C_n(D,c)$;\\ 
		$r^n(c)=r(c)\cap C_n(D,c)$.
	\end{center}
	With these two subsets, for $c\in C(D)$ and $n\in \mathbb{N}$, the $n$th index function is defined by
	\begin{center}
		$g_c^n(v)=\sum_{e\in r^n(c)}\sgn(e)v^{\phi_c(d(e))}-\sum_{e\in l^n(c)}\sgn(e)v^{\phi_c(-d(e))}$.
	\end{center}
	One may refer to \cite{9} for details, where the notions mentioned above have been illustrated thoroughly.  Let
	\begin{center}
		$f_c^H(x,y,v)=\sum\limits_{n\in\mathbb{N}} x^{g_c^n(v)} y^n$.
	\end{center}
	The function $f_c^H(x,y,v)$ is a coloring-allowed function, since it has been proved in \cite{9} that $g_c^n(v)$ is a coloring-allowed function for each $n\in \mathbb{N}$. It follows that
	\begin{center}
		$
		\begin{aligned}
			\mathcal{A}_D^H(x,y,v)=\sum\limits_{c\in C(D)}u(c)f_c^H(x,y,v)&=\sum\limits_{c\in C(D)}u(c)\sum\limits_{n\in\mathbb{N}} x^{g_c^n(v)} y^n=\sum\limits_{c\in C(D)}\sum\limits_{n\in\mathbb{N}}u(c) x^{g_c^n(v)} y^n
		\end{aligned}
		$
	\end{center}
	is an invariant of $D$ by Theorem \ref{Theorem 3.1.}.
	\end{example}

	\begin{example}\label{Example 3.4.} For a knotoid diagram $D$, a fixed chord $c\in C(D)$ and $n\in\mathbb{N}$, let
	\begin{center}
		$
		\begin{aligned}
			r_n^+(c)&=r^+(c)\cap C_n(D,c)=\left\lbrace c'\in r(c)\mid\sgn(c')=1,\gcd(d(c),d(c'))=n\right\rbrace; \\
			r_n^-(c)&=r^-(c)\cap C_n(D,c)=\left\lbrace c'\in r(c)\mid\sgn(c')=-1,\gcd(d(c),d(c'))=n\right\rbrace; \\
			l_n^+(c)&=l^+(c)\cap C_n(D,c)=\left\lbrace c'\in l(c)\mid\sgn(c')=1,\gcd(d(c),d(c'))=n\right\rbrace; \\
			l_n^-(c)&=l^-(c)\cap C_n(D,c)=\left\lbrace c'\in l(c)|\mid\sgn(c')=-1,\gcd(d(c),d(c'))=n\right\rbrace .
		\end{aligned}
		$
	\end{center}
	The $n$th degree of the chord $c$ 
	is defined by
	\begin{center}
		$d_n(c)=|r_n^+(c)|-|r_n^-(c)|-|l_n^+(c)|+|l_n^-(c)|$.
	\end{center}
	It is a coloring-allowed function, and one may refer to \cite{11} for a detailed proof. Let
	\begin{center}
		$f_c^M(x_0,x_1,\cdots)=\prod_{n=0}^{\infty}x_n^{d_n(c)}$,
	\end{center}
	then $f_c^M$ is a coloring-allowed function as well. There are finite crossings in a knotoid diagram, hence $d_n(c)=0$ when $n$ is sufficiently large, consequently $x_n^{d_n(c)}=1$, which guarantees the convergence of the product given above. Moreover, the number of variables in $f_c^M$ can be considered to be finite by this fact. Therefore
	\begin{center}
		$\mathcal{A}_D^M(x_0,x_1,\cdots)=\sum\limits_{c\in C(D)}u(c)f_c^M(x_0,x_1,\cdots)=\sum\limits_{c\in C(D)}u(c)\prod_{n=0}^{\infty}x_n^{d_n(c)}$
	\end{center}
	is an invariant of $D$ by Theorem \ref{Theorem 3.1.}.
	\end{example}

	We will calculate the coloring-allowed invariants given in Examples 3.1-3.4 of a knotoid diagram in Example \ref{Example 3.5.}.

	\begin{example}\label{Example 3.5.} Let $D$ be the planar knotoid diagram shown in Figure \ref{13}. Then a direct calculation gives

	\begin{figure}[ht]
		\centering
		\includegraphics[width=0.48\textwidth]{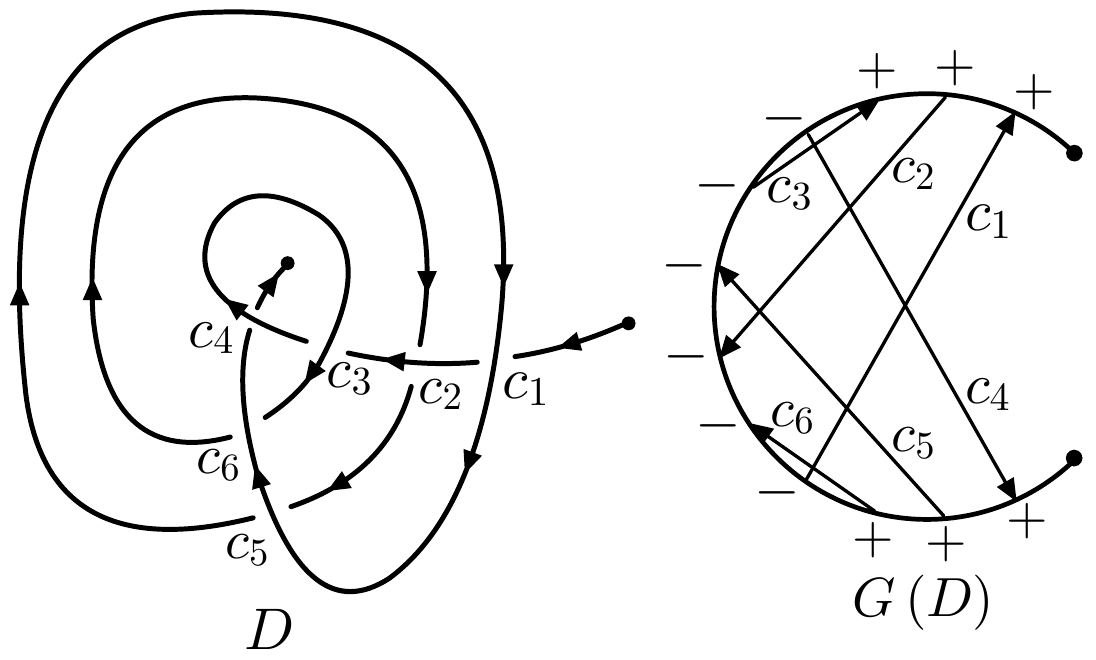}
		\caption{$D$ and $G(D)$}
		\label{13}
	\end{figure}

	\begin{table}[h!]
		\centering
		\renewcommand{\arraystretch}{1.4}
		\begin{tabular}{|c|c|c|c|c|c|c|}
			\hline
			&$c_1$&$c_2$&$c_3$&$c_4$&$c_5$&$c_6$\\
			\hline
			$u(c)$&3&2&1&$-3$&$-2$&$-1$\\
			\hline
			$d(c)$&$-3$&2&$-1$&3&$-2$&$-1$\\
			\hline
			$d_1(c)$&$-2$&1&$-1$&2&$-1$&$-1$\\
			\hline
			$d_2(c)$&0&1&0&0&$-1$&0\\
			\hline
			$d_3(c)$&$-1$&0&0&1&0&0\\
			\hline
			$i(c)$&3&2&1&3&2&1\\
			\hline
			$g_c^1(v)$&$-v^2-v$&$v$&$-1$&$v^2+v$&$-v$&$-1$\\
			\hline
			$g_c^2(v)$&0&1&0&0&$-1$&0\\
			\hline
			$g_c^3(v)$&$-1$&0&0&1&0&0\\
			\hline
		\end{tabular}
		\caption{The knotoid diagram $D$ in Figure \ref{13}}
		\label{Table 1}
	\end{table}

	As a result,
	
	\begin{center}
		$
		\begin{aligned}
			&\mathcal{A}_D^F(x)=-3x^3+2x^2-2x^{-2}+3x^{-3},\\
			&\mathcal{A}_D^G(x)=0,\\
			&\mathcal{A}_D^H(x,y,v)=3(x^{-1}y^3+x^{-v^2-v}y)+2(xy^2+x^vy)-3(xy^3+x^{v^2+v}y)-2(x^{-1}y^2+x^{-v}y),\\
			&\mathcal{A}_D^M(x_1,x_2,x_3)=-3x_1^2x_3+2x_1x_2-2x_1^{-1}x_2^{-1}+3x_1^{-2}x_3^{-1}.\\
		\end{aligned}
		$
	\end{center}

	\end{example}

	One may find that the notions involved in the definition of coloring-allowed invariants can be generalized to classical knots, thus it seems natural to generalize the notion of coloring-allowed invariants itself to classical knots as well. However, this is not the case since every coloring-allowed invariant becomes trivial for zero-height knotoid diagrams. Proposition \ref{Proposition 3.3.} is evident by Lemma \ref{Lemma 3.3.}.

	\begin{proposition}\label{Proposition 3.3.} If $D$ is a zero-height knotoid diagram and $\mathcal{A}_D$ is a coloring allowed invariant, then $\mathcal{A}_D$ is trivial, namely $\mathcal{A}_D\equiv 0$.\\
	\end{proposition}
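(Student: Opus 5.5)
The plan is to read the statement directly off the defining formula of Theorem \ref{Theorem 3.1.} and apply Lemma \ref{Lemma 3.3.} termwise. Write the coloring-allowed invariant as
\begin{center}
$\mathcal{A}_D(x_1,\cdots,x_n)=\sum\limits_{c\in C(D)} u(c)f_c(x_1,\cdots,x_n)$,
\end{center}
where $f_c$ is a coloring-allowed function in the sense of Definition \ref{Definition 3.1.}. Since $D$ is zero-height, Lemma \ref{Lemma 3.3.} gives $u(c)=0$ for every $c\in C(D)$. Hence every summand $u(c)f_c(x_1,\cdots,x_n)$ vanishes, whatever value $f_c$ takes, and so $\mathcal{A}_D\equiv 0$.

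One point needs care. The statement should be read as saying that the invariant of the knotoid represented by $D$ is zero. The computation above is carried out on the given zero-height diagram $D$ itself. By Theorem \ref{Theorem 3.1.}, $\mathcal{A}$ takes the same value on every diagram equivalent to $D$. So the conclusion holds for the whole equivalence class, even for diagrams in that class that are not zero-height and whose crossings may individually have nonzero $u(c)$.

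There is no real obstacle here. The only subtlety is the step already isolated in Lemma \ref{Lemma 3.3.}: closing $D$ along the connecting arc leaves the coloring numbers of the crossings unchanged. This holds because the new arc creates no crossings and does not alter the local labels around any existing crossing. Everything beyond that is the termwise vanishing described above.
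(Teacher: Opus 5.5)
Your proposal is correct and is exactly the paper's argument: the paper simply declares Proposition 3.3 evident from Lemma 3.3, since $u(c)=0$ at every crossing of a zero-height diagram kills each term of $\sum_{c\in C(D)} u(c)f_c(x_1,\cdots,x_n)$. Your added remark, that by Theorem 3.1 the value is then $0$ on every diagram equivalent to $D$, is a correct and harmless extra.
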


	\section{The 4-phases functions}

	In this section, we will introduce 4 coloring-alowed invariants, which are called the 4-phases functions, and illustrate several properties of them. \\
	
	\noindent\textbf{Definition 4.1. (The 4-phases functions)} Let $D$ be a planar knotoid diagram and $c\in C(D)$. \\
	Let $f_c^B(x,v)=x^{g_c(v)}$, then the \textbf{$B$-function} of $D$ is defined by
	\begin{center}
		$B_D(x,v)=\mathcal{A}_D^B(x,v)=\displaystyle \sum\limits_{c\in C(D)}  u(c)f_c^B(x,v)=\displaystyle \sum\limits_{c\in C(D)}  u(c)x^{g_c(v)}$.
	\end{center}
	Let $f_c^C(x,v)=x^{h_c(v)}$, then the \textbf{$C$-function} of $D$ is defined by
	\begin{center}
		$C_D(x,v)=\mathcal{A}_D^C(x,v)=\displaystyle \sum\limits_{c\in C(D)}  u(c)f_c^C(x,v)=\displaystyle \sum\limits_{c\in C(D)}  u(c)x^{h_c(v)}$.
	\end{center}
	Let $f_c^{\widetilde{B}}(x,v)=x^{\widetilde{g}_c(v)}$, then the \textbf{$\widetilde{B}$-function} of $D$ is defined by
	\begin{center}
		$\widetilde{B}_D(x,v)=\mathcal{A}_D^{\widetilde{B}}(x,v)=\displaystyle \sum\limits_{c\in C(D)}  u(c)f_c^{\widetilde{B}}(x,v)=\displaystyle \sum\limits_{c\in C(D)}  u(c)x^{\widetilde{g}_c(v)}$.
	\end{center}
	Let $f_c^{\widetilde{C}}(x,v)=x^{\widetilde{h}_c(v)}$, then the \textbf{$\widetilde{C}$-function} of $D$ is defined by
	\begin{center}
		$\widetilde{C}_D(x,v)=\mathcal{A}_D^{\widetilde{C}}(x,v)=\displaystyle \sum\limits_{c\in C(D)}  u(c)f_c^{\widetilde{C}}(x,v)=\displaystyle \sum\limits_{c\in C(D)}  u(c)x^{\widetilde{h}_c(v)}$.
	\end{center}

	The number of crossings in a knotoid diagram is finite, hence there are only finite non-zero terms in the 4 series above, which ensures the convergence of them. These functions given above are collectively referred to as the \textbf{4-phases functions}. \\

	\begin{theorem}\label{Theorem 4.1.} Let $D$ be a planar knotoid diagram, then the 4-phases functions $B_D(x,v)$, $C_D(x,v)$,  $\widetilde{B}_D(x,v)$ and $\widetilde{C}_D(x,v)$ are invariants of $D$.
	\end{theorem}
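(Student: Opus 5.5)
By Theorem \ref{Theorem 3.1.}, it suffices to show that each of the four exponents $g_c(v)$, $h_c(v)$, $\widetilde{g}_c(v)$ and $\widetilde{h}_c(v)$ is a coloring-allowed function in the sense of Definition \ref{Definition 3.1.}. Then $x^{g_c(v)}$ and the other three exponentials are coloring-allowed as well, since conditions (1)--(3) pass through any function of the exponent. I would work entirely in Gauss diagrams. Each of $\Omega_2a$ and $\Omega_3a$ has a standard local picture on $G(D)$.

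\begin{itemize}
\item $\Omega_2a$ adds two parallel chords $c_1,c_2$ with adjacent endpoints, the same direction, and opposite signs.
\item $\Omega_3a$ permutes adjacent endpoints of three chords $c_1,c_2,c_3$ on three short arcs, without changing their directions or signs.
\end{itemize}

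For each case I would follow the same three steps.

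\textbf{Step 1: degrees are preserved.} I first check that the degree $d(e)$ and the index $i(e)$ are unchanged for every surviving chord $e$.

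\begin{itemize}
\item Under $\Omega_2a$, a chord crossing $c_1$ also crosses $c_2$, and their endpoints have opposite signs. Their contributions to the left-hand sum cancel.
\item Under $\Omega_3a$, each pair among $c_1,c_2,c_3$ keeps its crossing/non-crossing status. Sliding an endpoint past an adjacent endpoint of a non-involved chord does not happen, so sides are unchanged.
\item For $i(e)$, the lead side is also preserved, since the leg and head are not touched.
\end{itemize}

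Proposition \ref{Proposition 3.1.} gives a consistency check: $|d(c)|=|u(c)|$, and $u$ is controlled by Lemma \ref{Lemma 3.1.}. Consequently $\phi_e$ is unchanged for every surviving chord.

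\textbf{Step 2: condition (3).} Take a chord $c$ not involved in the move.

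\begin{itemize}
\item Under $\Omega_2a$, either both new chords cross $c$ or neither does. If both do, they lie in the same set $r(c)$ or $l(c)$ (resp. $\rho(c)$, $\lambda(c)$), because they are parallel with the same direction. Since $d(c_1)=d(c_2)$, they contribute $\sgn(c_1)v^{\phi_c(\pm d)}+\sgn(c_2)v^{\phi_c(\pm d)}=0$. The same holds with $i$ in place of $d$, as $i(c_1)=i(c_2)$.
\item Under $\Omega_3a$, the set of chords crossing $c$, their directions relative to $c$, and their degrees are unchanged by Step 1. Hence the sums are unchanged.
\end{itemize}

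\textbf{Step 3: conditions (1) and (2).} This is the main obstacle.

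For (1), the chords crossing $c_1$ are exactly those crossing $c_2$ (excluding each other), lying on the same sides, and $d(c_1)=d(c_2)$ gives $\phi_{c_1}=\phi_{c_2}$. Hence $g_{c_1}=g_{c_2}$. The same argument applies to $h$, using the common lead side, and to $\widetilde g,\widetilde h$.

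For (2), I would check that for each $c_i$ the multiset of (side, sign, degree) of chords crossing it is the same in $D$ and $D'$.

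\begin{itemize}
\item Chords outside the triangle are unaffected, by Step 1.
\item Among $c_1,c_2,c_3$, the move may create or destroy crossings between them in the Gauss diagram, depending on the cyclic order of their six endpoints. In a genuine $\Omega_3$ pattern on a Gauss diagram the three chords pairwise cross or not identically before and after, with only endpoint order on each short arc reversed. Direction relative to one another is determined by over/under data, which $\Omega_3$ preserves.
\end{itemize}

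I would verify this with Polyak's oriented $\Omega_3a$ picture, i.e.\ a case check on the Gauss diagram of the braid-like move. That case check, including how $i$ and the lead side behave when the leg/head arc separates the endpoints, is where I expect most of the work. The remaining identities then follow by substituting into the definitions.
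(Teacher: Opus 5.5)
\textbf{Condition (2): the main gap.} Your reduction to the exponents and your treatment of condition (3) match the paper, but your verification of condition (2) rests on a false claim.

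Under an $\Omega_3a$ move the pairwise crossing pattern of $c_1,c_2,c_3$ is \emph{not} preserved. Exchanging two adjacent endpoints of distinct chords toggles whether those two chords interleave, and changes nothing else. Each pair of triangle chords shares exactly one of the three short arcs, so all three pairs flip. In Figure~\ref{16} the triangle chords pairwise cross on one side of the move and are pairwise disjoint on the other. Hence the multiset of (side, sign, degree) of chords crossing $c_i$ changes, and the case check you defer would be trying to confirm a false statement.

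What is actually needed is a cancellation between the two triangle terms, e.g. $g_{c_1,D}(v)-g_{c_1,D'}(v)=\sgn(c_2)v^{\phi_{c_1}(d(c_2))}+\sgn(c_3)v^{\phi_{c_1}(d(c_3))}$. This vanishes for two reasons:
\begin{itemize}
\item $\sgn(c_2)=-\sgn(c_3)$;
\item the linear relation $d(c_2)=d(c_1)+d(c_3)$, which the paper reads off from the signed endpoint sums $x,y$ of the blue and red regions.
\end{itemize}
Together with $\phi_{c_1}(d(c_1))=0$ and the homomorphism property, this gives $\phi_{c_1}(d(c_2))=\phi_{c_1}(d(c_3))$; the same works for $c_2$ and $c_3$. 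This degree relation is the missing key idea.

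Relatedly, in your Step 1 the invariance of $d(c_i)$ for the triangle chords cannot come from the (false) crossing-pattern claim. It should come from $d(c)=u(c)\sgn(c)$ (Proposition~\ref{Proposition 3.1.}) together with Lemma~\ref{Lemma 3.1.}(3), not merely from $|d|=|u|$.

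\textbf{Condition (1): a smaller hole.} In $\Omega_2a$ the two strands are co-oriented, so both meet $c_1$ before $c_2$, and the chords $c_1,c_2$ cross each other. Your phrase ``excluding each other'' discards exactly the terms that make up the paper's verification of (1).
\begin{itemize}
\item The chord $c_2$ contributes $\pm\sgn(c_2)v^{\phi_{c_1}(\pm d(c_2))}=\pm\sgn(c_2)$ to $g_{c_1}(v)$.
\item The chord $c_1$ lies in the opposite class relative to $c_2$, so it contributes $\mp\sgn(c_1)$ to $g_{c_2}(v)$.
\end{itemize}
These agree because $d(c_1)=d(c_2)$, $\phi_c(d(c))=0$ and $\sgn(c_1)=-\sgn(c_2)$. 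For $\widetilde g$, use $|i(c_2)|=|d(c_1)|$.

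Finally, condition (3) must also cover $\Omega_1$. This is immediate: the new chord is isolated, and its two adjacent endpoints contribute $0$ to every other degree and leave every lead side unchanged.
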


	\begin{proof} 
		Here we prove the invariance of $B_D(x,v)$ as an instance, since the invariance of $C_D(x,v)$,  $\widetilde{B}_D(x,v)$ and $\widetilde{C}_D(x,v)$ can be verified analogously. By Theorem \ref{Theorem 3.1.}, it suffices to show that $x^{g_c(v)}$ is a coloring-allowed function. Thereby a proof showing that $g_c(v)$  satisfies (1), (2) and (3) in Definition \ref{Definition 3.1.} is enough.

	We will verify (3) first. We have seen that $u(c)$ preserves during an $\Omega$-move for a crossing $c$ which is not involved in it in Lemma \ref{Lemma 3.1.}. Obviously, the sign $\sgn(c)$ of such a crossing $c$ preserves as well. Then by Proposition \ref{Proposition 3.1.}, $d(c)$ preserves during an $\Omega$-move. As a consequence, for 2 such crossings $c$ and $e$ which satisfies $c\cap e\neq \varnothing$ for the corresponding chords in the Gauss diagram, $e$ contributes equally to $g_{c,D}(v)$ and $g_{c,D'}(v)$. Therefore we only need to consider the effect of the crossings involved in an $\Omega$-move to $g_c(v)$.

	\begin{figure}[ht]
		\centering
		\includegraphics[width=0.15\textwidth]{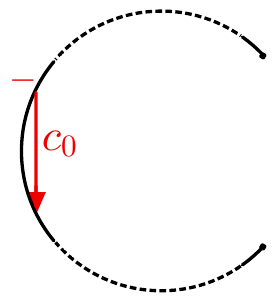}
		\caption{Gauss diagram in an $\Omega_1$ move}
		\label{14}
	\end{figure}

	In an $\Omega_1$ move, for the new crossing $c_0$ generated by this move, the corresponding chord in the Gauss diagram is isolated, as shown in Figure \ref{14}, thus $g_c(v)$ preserves during an $\Omega_1$ move. In an $\Omega_2a$ move, for the new crossings $c_1$ and $c_2$ generated by this move, the corresponding chords in the Gauss diagram pass through $c$ in the same direction with opposite signs, and $d(c_1)=d(c_2)$, hence their effects on $g_c(v)$ offset, which yields $g_{c,D}(v)=g_{c,D'}(v)$. The invariance of $g_c(v)$ during an $\Omega_3a$ move is evident.

	For (1), suppose that $D$ and $D'$ are two oriented knotoid diagrams which are related by an $\Omega_2a$ move, $G(D)$ and $G(D')$ are the corresponding Gauss diagrams respectively, as shown in Figure \ref{15}, and $c_1$, $c_2$ are the new crossings in $D'$ generated by this $\Omega_2a$ move. Clearly, $\sgn(c_1)=-1$, $\sgn(c_2)=1$, $d(c_1)=d(c_2)$, $d(c)$ preserves for a crossing $c$ in $D$ and $c$ crosses $c_1$ and $c_2$ in the same direction, therefore
	
	\begin{center}
		$
		\begin{aligned}
			g_{c_1}(v)-g_{c_2}(v)&=\sgn(c_2)v^{\phi_{c_1}(d(c_2))}-(-\sgn(c_1)v^{\phi_{c_2}(-d(c_1))})=v^0+(-1)v^0=0,
		\end{aligned}
		$
	\end{center}
	which is equivalent to $g_{c_1}(v)=g_{c_2}(v)$. 
	
	\begin{figure}[ht]
		\centering
		\includegraphics[width=0.4\textwidth]{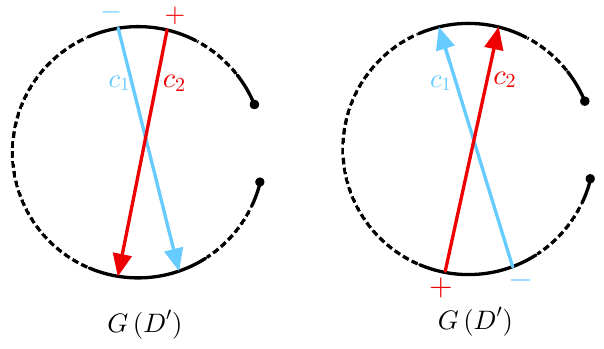}
		\caption{Gauss diagrams in an $\Omega_2a$ move}
		\label{15}
	\end{figure}

	For (2), suppose that $D$ and $D'$ are two oriented knotoid diagrams which are related by an $\Omega_3a$ move, $G(D)$ and $G(D')$ are the corresponding Gauss diagrams respectively, as shown in Figure \ref{16}. Suppose that $c_1$, $c_2$ and $c_3$ are the crossings related to this $\Omega_3a$ move. Evidently, $\sgn_D(c_i)=\sgn_{D'}(c_i)$, $d_D(c_i)=d_{D'}(c_i)$, hence for the sake of simplicity, we denote them by $\sgn(c_i)$ and $d(c_i)$ respectively, $i=1,2,3$. Denote the sum of signs of chord endpoints on the blue region by $x$ and the red region by $y$ respectively. Notice that 
	\begin{center}
		$d(c_1)+d(c_3)-d(c_2)=x+y-(x+y)=0$.
	\end{center}

	\begin{figure}[ht]
		\centering
		\includegraphics[width=0.44\textwidth]{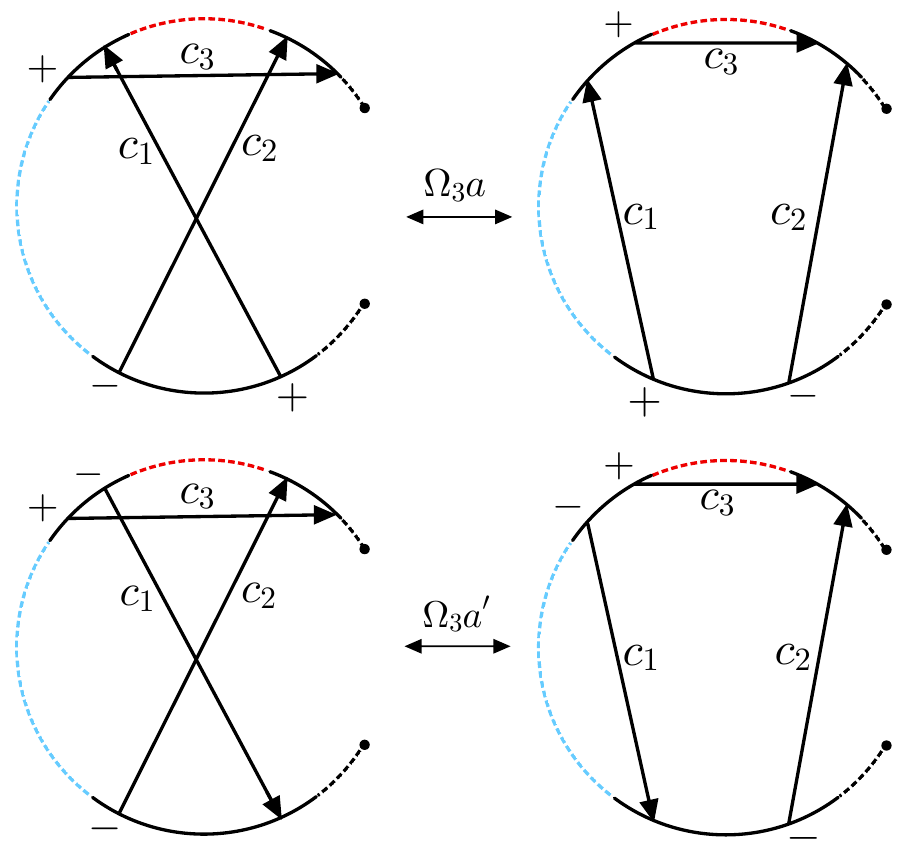}
		\caption{Gauss diagrams in an $\Omega_3a$ move}
		\label{16}
	\end{figure}

	As a result, we have
	\begin{center}
		$
		\begin{aligned}
			g_{c_1,D}(v)-g_{c_1,D'}(v)&=\sgn(c_2)v^{\phi_{c_1}(d(c_2))}+\sgn(c_3)v^{\phi_{c_1}(d(c_3))}\\
			&=-v^{\phi_{c_1}(d(c_2))}+v^{\phi_{c_1}(d(c_3))}\\
			&=-v^{\phi_{c_1}(d(c_1)+d(c_3))}+v^{\phi_{c_1}(d(c_3))}\\
			&=-v^{\phi_{c_1}(d(c_3))}+v^{\phi_{c_1}(d(c_3))}=0,\\
			g_{c_2,D}(v)-g_{c_2,D'}(v)&=-\sgn(c_1)v^{\phi_{c_2}(-d(c_1))}+\sgn(c_3)v^{\phi_{c_2}(d(c_3))}\\&=-v^{\phi_{c_2}(-d(c_1))}+v^{\phi_{c_2}(d(c_3))}\\
			&=-v^{\phi_{c_2}(-d(c_2)+d(c_3))}+v^{\phi_{c_2}(d(c_3))}\\
			&=-v^{\phi_{c_2}(d(c_3))}+v^{\phi_{c_2}(d(c_3))}=0,\\
			g_{c_3,D}(v)-g_{c_3,D'}(v)&=-\sgn(c_1)v^{\phi_{c_3}(-d(c_1))}-\sgn(c_2)v^{\phi_{c_3}(-d(c_2))}\\
			&=-v^{\phi_{c_3}(-d(c_1))}+v^{\phi_{c_3}(-d(c_2))}\\
			&=-v^{\phi_{c_3}(-d(c_2)+d(c_3))}+v^{\phi_{c_3}(-d(c_2))}\\
			&=-v^{\phi_{c_3}(-d(c_2))}+v^{\phi_{c_3}(-d(c_2))}=0.
		\end{aligned}
		$
	\end{center}
	Here we have used the fact that $\phi_c$ is a  homomorphism and $\phi_c(d(c))=0$ in the calculation above which seems a little bit tedious. As a consequence, $g_{c_i,D}(v)=g_{c_i,D'}(v)$, $i=1,2,3$. This proves the invariance of $B_D(x,v)$. 
	\end{proof}

	We will calculate the 4-phases functions of a knotoid diagram in Example \ref{Example 4.1.}.

	\begin{example}\label{Example 4.1.} Let $D$ be the planar knotoid diagram shown in Figure \ref{13}. Then a direct calculation gives

	\begin{table}[h!]
		\centering
		\renewcommand{\arraystretch}{1.4}
		\begin{tabular}{|c|c|c|c|c|c|c|}
			\hline
			&$c_1$&$c_2$&$c_3$&$c_4$&$c_5$&$c_6$\\
			\hline
			$u(c)$&3&2&1&$-3$&$-2$&$-1$\\
			\hline
			$d(c)$&$-3$&2&$-1$&3&$-2$&$-1$\\
			\hline
			$i(c)$&3&2&1&3&2&1\\
			\hline
			$g_c(v)$&$-v^2-v-1$&$v+1$&$-1$&$v^2+v+1$&$-v-1$&$-1$\\
			\hline
			$h_c(v)$&$v^2+v+1$&$v+1$&$1$&$v^2+v+1$&$v+1$&$1$\\
			\hline
			$\widetilde{g}_c(v)$&$-v^2-v-1$&$v+1$&$-1$&$2v^2+1$&$-v-1$&$-1$\\
			\hline
			$\widetilde{h}_c(v)$&$v^2+v+1$&$v+1$&$1$&$2v^2+1$&$v+1$&$1$\\
			\hline
		\end{tabular}
		\caption{The knotoid diagram $D$ in Figure \ref{13}}
		\label{Table 2}
	\end{table}

	As a result,
	
	\begin{center}
		$
		\begin{aligned}
			B_D(x,v)&=3x^{-v^2-v-1}-3x^{v^2+v+1}+2x^{v+1}-2x^{-v-1}, 
			~C_D(x,v)=0,\\
			\widetilde{B}_D(x,v)&=3x^{-v^2-v-1}-3x^{2v^2+1}+2x^{v+1}-2x^{-v-1},
			~~~\widetilde{C}_D(x,v)=3x^{v^2+v+1}-3x^{2v^2+1}.
		\end{aligned}
		$
	\end{center}

	\end{example}

	Next we discuss some properties of the 4-phases functions.

	\begin{theorem}\label{Theorem 4.2.} Let $D$ be a knotoid diagram. If $-D$ is the inverse of $D$, then\\ (1)$B_{-D}(x,v)=-B_D(x^{-1},v)$, $C_{-D}(x,v)=-C_D(x,v^{-1})$,\\     (2)$\widetilde{B}_{-D}(x,v)=-\widetilde{B}_D(x^{-1},v^{-1})$, $\widetilde{C}_{-D}(x,v)=-\widetilde{C}_D(x,v)$.
	\end{theorem}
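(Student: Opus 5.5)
The plan is to compare the Gauss diagrams $G(D)$ and $G(-D)$ chord by chord. From this we read off how $u$, $d$, $i$, $l,r$ and $\lambda,\rho$ transform under inversion, and then substitute into the four index functions. The crossings of $-D$ are those of $D$, so we use the paper's convention of naming them identically.

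\textbf{Step 1 (local data).} Reversing the orientation of $D$ reverses both strands at every crossing $c$. The over/under information is unchanged, so the chord still runs from the overpassing preimage to the underpassing preimage. Reversing both strands also does not change the local orientation, so $\sgn_{-D}(c)=\sgn_D(c)$. Lemma \ref{Lemma 3.2.} gives $u_{-D}(c)=-u_D(c)$.

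\textbf{Step 2 (global data).} Reparametrizing $t\mapsto 1-t$ reflects the Gauss diagram: the base arc is traversed in the opposite direction while chord directions and signs are kept. Hence for each chord $c$, what was the left hand side of $c$ becomes the right hand side. Concretely, $l_{-D}(c)=r_D(c)$ and $r_{-D}(c)=l_D(c)$.

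The lead side of $c$ is defined as the side containing the leg and head endpoints, and this is the same set of points. Hence $\lambda_{-D}(c)=\lambda_D(c)$ and $\rho_{-D}(c)=\rho_D(c)$.

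Consequently $i_{-D}(c)=i_D(c)$. Also $d_{-D}(c)$ is the endpoint sign sum on the old right side, and the same computation as for $i(c)+d(c)=0$ in the Preliminaries shows this equals $-d_D(c)$. Equivalently, combine Proposition \ref{Proposition 3.1.} with Step 1. In particular $|d(c)|$ is unchanged, so $\phi_c$ is the same map for $D$ and $-D$.

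\textbf{Step 3 (substitution).} We use that $\phi_c$ is a homomorphism, so that $v^{\phi_c(-k)}=v^{-\phi_c(k)}$.

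For $g$:
$$g_{c,-D}(v)=\sum_{e\in l_D(c)}\sgn(e)v^{\phi_c(-d_D(e))}-\sum_{e\in r_D(c)}\sgn(e)v^{\phi_c(d_D(e))}=-g_{c,D}(v).$$

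For $h$, the sets $\rho,\lambda$ are unchanged and $d$ is negated, so $h_{c,-D}(v)=h_{c,D}(v^{-1})$.

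For $\widetilde g$, the sides are swapped but $i$ is unchanged, so $\widetilde g_{c,-D}(v)=-\widetilde g_{c,D}(v^{-1})$.

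For $\widetilde h$, nothing changes, so $\widetilde h_{c,-D}(v)=\widetilde h_{c,D}(v)$.

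Multiplying by $u_{-D}(c)=-u_D(c)$ and summing over $C(D)$ gives the four identities:
\begin{itemize}
\item $B_{-D}(x,v)=-\sum u_D(c)\,x^{-g_{c,D}(v)}=-B_D(x^{-1},v)$;
\item $C_{-D}(x,v)=-C_D(x,v^{-1})$;
\item $\widetilde B_{-D}(x,v)=-\widetilde B_D(x^{-1},v^{-1})$;
\item $\widetilde C_{-D}(x,v)=-\widetilde C_D(x,v)$.
\end{itemize}

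The main obstacle is Step 2. We must justify carefully that inversion reflects $G(D)$ while preserving chord orientation and endpoint signs. We must also check that the lead side really is preserved, which holds because it is characterized by containing the endpoints of the base arc rather than by the orientation. I would settle this by drawing the two parametrizations of a single chord explicitly, using the convention of Figure \ref{5}. Everything after that is routine bookkeeping in $\mathbb{Z}[v,v^{-1}]/(v^{|d(c)|}-1)$.
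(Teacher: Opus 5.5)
Your proposal is correct and takes essentially the same route as the paper. You compare $G(D)$ with its reflection $G(-D)$, record that $\sgn$, $i$, $\lambda$ and $\rho$ are preserved while $u$ and $d$ change sign, and substitute into the four index functions. The one difference is that you get $g_{c,-D}(v)=-g_{c,D}(v)$ and $\widetilde g_{c,-D}(v)=-\widetilde g_{c,D}(v^{-1})$ directly from $l_{-D}(c)=r_D(c)$ and $r_{-D}(c)=l_D(c)$. The paper instead derives them from the $h$ and $\widetilde h$ identities by a case split on which side of $c$ is the lead side, so your bookkeeping is slightly more direct.
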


	\begin{proof} Suppose that $G(D)$ and $G(-D)$ are the corresponding Gauss diagrams respectively, as shown in Figure \ref{18}. By the definition of the inverse of a knotoid diagram, $G(-D)$ is the Gauss diagram obtained by rotating $G(D)$ around the horizontal axis $180^\circ$, hence $\forall c\in C(D)$, we have $i_{-D}(c)=i_D(c)$, $\lambda_{-D}(c)=\lambda_D(c)$ and $\rho_{-D}(c)=\rho_D(c)$. Obviously, $\sgn_{-D}(c)=\sgn_D(c)$. The observation above implies that $\widetilde{h}_{c,-D}(v)=\widetilde{h}_{c,D}(v)$ for each chord $c$ in $G(D)$. By Lemma \ref{Lemma 3.2.}, $u_{-D}(c)=-u_D(c)$. As a result,
	
	\begin{center}
		$
		\begin{aligned}
			\widetilde{C}_{-D}(x,v)&=\displaystyle \sum\limits_{c\in C(-D)}  u_{-D}(c)x^{\widetilde{h}_{c,-D}(v)}=\displaystyle \sum\limits_{c\in C(D)}  -u_D(c)x^{\widetilde{h}_{c,D}(v)}=-\widetilde{C}_D(x,v).\\
		\end{aligned}
		$\\
	\end{center}

	\begin{figure}[ht]
		\centering
		\includegraphics[width=0.88\textwidth]{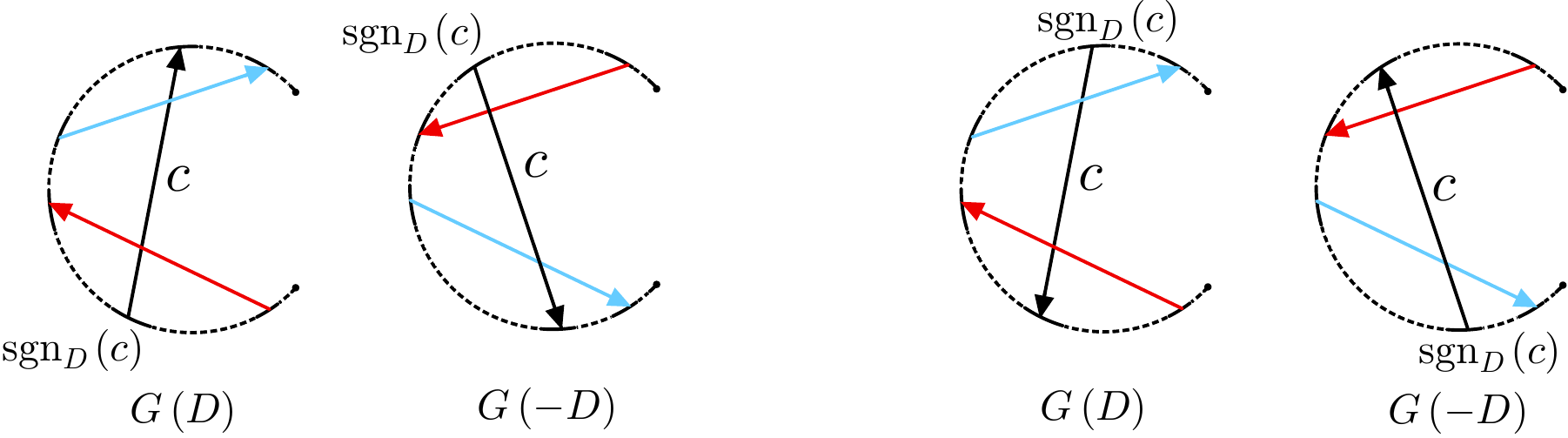}
		\caption{$G(D)$ and $G(-D)$}
		\label{18}
	\end{figure}

	For $C_{-D}(x,v)$, since $d_{-D}(e)=-d_D(e)$ for an arbitrary $e\in C(D)$, which can be seen in Figure \ref{18}, it follows that
	\begin{center}
		$
		\begin{aligned}
			h_{c,-D}(v)&=\sum_{e\in\rho_{-D}(c)}\sgn_{-D}(e)v^{\phi_c(d_{-D}(e))}-\sum_{e\in\lambda_{-D}(c)}\sgn_{-D}(e)v^{\phi_c(-d_{-D}(e))}\\
			&=\sum_{e\in\rho_{D}(c)}\sgn_{D}(e)v^{\phi_c(-d_{D}(e))}-\sum_{e\in\lambda_{D}(c)}\sgn_{D}(e)v^{\phi_c(d_{D}(e))}\\
			&=\sum_{e\in\rho_{D}(c)}\sgn_{D}(e)(v^{-1})^{\phi_c(d_{D}(e))}-\sum_{e\in\lambda_{D}(c)}\sgn_{D}(e)(v^{-1})^{\phi_c(-d_{D}(e))}\\
			&=h_{c,D}(v^{-1})
		\end{aligned}
		$
	\end{center}
	for all chords in $G(D)$. Consequently,
	\begin{center}
		$
		\begin{aligned}
			C_{-D}(x,v)&=\displaystyle \sum\limits_{c\in C(-D)}  u_{-D}(c)x^{{h}_{c,-D}(v)}=\displaystyle \sum\limits_{c\in C(D)}  -u_D(c)x^{{h}_{c,D}(v^{-1})}=-{C}_D(x,v^{-1}).\\
		\end{aligned}
		$\\
	\end{center}

	If the lead side of $c$ in $G(D)$ is the right hand side, and the lead side of $c$ in $G(-D)$ is the left hand side, as shown in the left half of Figure \ref{18}, then $g_{c,-D}(v)=h_{c,-D}(v)=h_{c,D}(v^{-1})=-g_{c,D}(v)$ and $\widetilde{g}_{c,-D}(v)=\widetilde{h}_{c,-D}(v)=\widetilde{h}_{c,D}(v)=-\widetilde{g}_{c,D}(v^{-1})$. Otherwise, if the lead side of $c$ in $G(D)$ is the left hand side, and the lead side of $c$ in $G(-D)$ is the right hand side, as shown in the right half of Figure \ref{18}, then $g_{c,-D}(v)=-h_{c,-D}(v^{-1})=-h_{c,D}(v)=-g_{c,D}(v)$
	and	
	$\widetilde{g}_{c,-D}(v)=-\widetilde{h}_{c,-D}(v^{-1})=-\widetilde{h}_{c,D}(v^{-1})=-\widetilde{g}_{c,D}(v^{-1})$.
	Different cases lead to the same conclusion. As a consequence,

	\begin{center}
		$
		\begin{aligned}
			B_{-D}(x,v)&=\displaystyle \sum\limits_{c\in C(-D)}  u_{-D}(c)x^{g_{c,-D}(v)}=\displaystyle \sum\limits_{c\in C(D)}  -u_D(c)x^{-g_{c,D}(v)}=-B_D(x^{-1},v)\\
		\end{aligned}
		$\\
	\end{center}
	and
	\begin{center}
		$
		\begin{aligned}
			\widetilde{B}_{-D}(x,v)&=\displaystyle \sum\limits_{c\in C(-D)}  u_{-D}(c)x^{\widetilde{g}_{c,-D}(v)}=\displaystyle \sum\limits_{c\in C(D)}  -u_D(c)x^{-\widetilde{g}_{c,D}(v^{-1})}=-\widetilde{B}_D(x^{-1},v^{-1}).\\
		\end{aligned}
		$
	\end{center}
	
	\end{proof}

	By Theorem \ref{Theorem 4.2.}, the following corollary holds immediately:
	
	\begin{corollary}\label{Corollary 4.1.} If $D$ is invertible, then\\ (1)$B_{D}(x,v)=-B_D(x^{-1},v)$, $C_{D}(x,v)=-C_D(x,v^{-1})$,\\  (2)$\widetilde{B}_{D}(x,v)=-\widetilde{B}_D(x^{-1},v^{-1})$, $\widetilde{C}_{D}(x,v)=0$. \\
	\end{corollary}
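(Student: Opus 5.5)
The plan is to specialize Theorem \ref{Theorem 4.2.} to the case $-D\sim D$ and use the invariance established in Theorem \ref{Theorem 4.1.}.

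\textbf{Step 1.} Recall that $D$ is invertible exactly when $D$ and its inverse $-D$ represent the same planar knotoid. In that case $-D$ is related to $D$ by a finite sequence of oriented Reidemeister moves and planar isotopies.

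\textbf{Step 2.} By Theorem \ref{Theorem 4.1.}, each of the 4-phases functions is unchanged along such a sequence. Hence
$B_{-D}(x,v)=B_D(x,v)$, $C_{-D}(x,v)=C_D(x,v)$, $\widetilde{B}_{-D}(x,v)=\widetilde{B}_D(x,v)$ and $\widetilde{C}_{-D}(x,v)=\widetilde{C}_D(x,v)$.

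\textbf{Step 3.} Substitute these four equalities into the left-hand sides of the identities in Theorem \ref{Theorem 4.2.}. This immediately gives $B_D(x,v)=-B_D(x^{-1},v)$, $C_D(x,v)=-C_D(x,v^{-1})$ and $\widetilde{B}_D(x,v)=-\widetilde{B}_D(x^{-1},v^{-1})$.

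\textbf{Step 4.} For the last function, Theorem \ref{Theorem 4.2.} together with Step 2 yields $\widetilde{C}_D(x,v)=-\widetilde{C}_D(x,v)$, so $2\widetilde{C}_D(x,v)=0$. The function $\widetilde{C}_D$ is a finite sum with integer coefficients $u(c)$ of formal monomials $x^{\widetilde{h}_c(v)}$, so it lies in a torsion-free group ring over $\mathbb{Z}$. Therefore $2\widetilde{C}_D=0$ forces $\widetilde{C}_D(x,v)=0$.

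\textbf{Main obstacle.} There is essentially none, since every step is direct. The only point that needs a remark is the one in Step 4: dividing by $2$ is justified because the coefficients are integers in a torsion-free ring.
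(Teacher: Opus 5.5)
Your proposal is correct and follows the paper's argument: the paper also obtains the corollary immediately from Theorem 4.2, after identifying each function of $-D$ with the corresponding function of $D$ via the invariance in Theorem 4.1. Your extra remark in Step 4 fills a small detail the paper leaves implicit. Since the coefficients $u(c)$ are integers and the formal monomials $x^{\widetilde{h}_c(v)}$ span a torsion-free group, $2\widetilde{C}_D=0$ does force $\widetilde{C}_D=0$.
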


	\begin{theorem}\label{Theorem 4.3.} Let $D$ be a knotoid diagram. If $D^*$ is the mirror image of $D$, then\\ (1)$B_{D^*}(x,v)=B_D(x^{-1},v^{-1})$, $C_{D^*}(x,v)=C_D(x,v)$,\\  (2)$\widetilde{B}_{D^*}(x,v)=\widetilde{B}_D(x^{-1},v)$, $\widetilde{C}_{D^*}(x,v)=\widetilde{C}_D(x,v^{-1})$.
	\end{theorem}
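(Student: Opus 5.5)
The plan is to work entirely at the level of Gauss diagrams, as in the proof of Theorem \ref{Theorem 4.2.}. I will first record how each ingredient of the four index functions changes under the mirror image, then substitute into $g_c$, $h_c$, $\widetilde{g}_c$, $\widetilde{h}_c$. I take $D^*$ to be $D$ with every crossing switched; this is the reading consistent with Lemma \ref{Lemma 3.2.}.

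\textbf{Effect on the Gauss diagram.} Switching a crossing exchanges the over- and underpassing preimages, and it changes the sign of the crossing. So $G(D^*)$ is obtained from $G(D)$ by reversing every chord and replacing $\sgn(c)$ with $-\sgn(c)$. An endpoint that was the start of $e$, marked $\sgn_D(e)$, becomes the end of $e$, marked $-\sgn_{D^*}(e)=\sgn_D(e)$. Hence every endpoint keeps its marked sign. I then derive the following four facts.
\begin{enumerate}
\item[(a)] $i_{D^*}(c)=i_D(c)$. The lead side of $c$ does not depend on the orientation of $c$, and the endpoint signs are unchanged.
\item[(b)] $d_{D^*}(c)=-d_D(c)$. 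Reversing $c$ swaps its left and right sides, and the endpoint signs on chords crossing $c$ sum to zero. This also follows from Proposition \ref{Proposition 3.1.} together with $u_{D^*}(c)=u_D(c)$ from Lemma \ref{Lemma 3.2.}. In particular $|d(c)|$ is unchanged, so $\phi_c$ is the same map for $D$ and $D^*$.
\item[(c)] $l_{D^*}(c)=l_D(c)$ and $r_{D^*}(c)=r_D(c)$. Both $e$ and the side-labelling of $c$ are reversed, so these two reversals cancel.
\item[(d)] $\lambda_{D^*}(c)=\rho_D(c)$ and $\rho_{D^*}(c)=\lambda_D(c)$. The lead side is fixed while $e$ is reversed.
\end{enumerate}

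\textbf{Substitution.} I will use $\sgn_{D^*}(e)=-\sgn_D(e)$, facts (a)--(d), and the oddness $v^{\phi_c(-k)}=v^{-\phi_c(k)}$.
\begin{enumerate}
\item[(i)] For $g_c$, facts (b) and (c) give
\[
g_{c,D^*}(v)=-\sum_{e\in r(c)}\sgn(e)v^{\phi_c(-d(e))}+\sum_{e\in l(c)}\sgn(e)v^{\phi_c(d(e))}=-g_{c,D}(v^{-1}).
\]
\item[(ii)] For $h_c$, the interchange of $\lambda$ and $\rho$ from (d), combined with the sign flips from $\sgn$ and from (b), gives $h_{c,D^*}(v)=h_{c,D}(v)$.
\item[(iii)] For $\widetilde{g}_c$, facts (a) and (c) leave only the sign flip, so $\widetilde{g}_{c,D^*}(v)=-\widetilde{g}_{c,D}(v)$.
\item[(iv)] For $\widetilde{h}_c$, facts (a) and (d) give $\widetilde{h}_{c,D^*}(v)=\sum_{e\in\rho(c)}\sgn(e)v^{\phi_c(-i(e))}-\sum_{e\in\lambda(c)}\sgn(e)v^{\phi_c(i(e))}$. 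By oddness this equals $\widetilde{h}_{c,D}(v^{-1})$.
\end{enumerate}

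\textbf{Conclusion.} Multiplying by $u_{D^*}(c)=u_D(c)$ and summing over $C(D)$ gives the four identities. For example, $B_{D^*}(x,v)=\sum u_D(c)(x^{-1})^{g_{c,D}(v^{-1})}=B_D(x^{-1},v^{-1})$. The other three identities follow in the same way from (ii)--(iv).

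The main obstacle is the first step: identifying precisely how $G(D^*)$ relates to $G(D)$, especially the claims that endpoint signs are preserved and that $l$, $r$ stay fixed while $\lambda$, $\rho$ swap. Once these four facts are established, the rest is the same bookkeeping as in Theorem \ref{Theorem 4.2.}. I would check them against Figure \ref{9} and against the consistency requirement $d=u\,\sgn$ from Proposition \ref{Proposition 3.1.}.
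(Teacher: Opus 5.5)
Your proposal is correct and is essentially the paper's proof: both work on the Gauss diagram, record $\sgn_{D^*}=-\sgn_D$, $i_{D^*}=i_D$, $d_{D^*}=-d_D$, $l_{D^*}=l_D$, $r_{D^*}=r_D$ and $u_{D^*}=u_D$, and substitute these into the four index functions. The only difference is in how you treat $h_c$ and $\widetilde{h}_c$: you use the interchange $\lambda_{D^*}(c)=\rho_D(c)$, $\rho_{D^*}(c)=\lambda_D(c)$ directly, whereas the paper splits into cases according to which side of $c$ is the lead side and reduces to $g_c$ and $\widetilde{g}_c$ via $h_c=g_c$ or $h_c(v)=-g_c(v^{-1})$; your route avoids that case split and gives the same identities.
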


	\begin{proof} Suppose that $G(D)$ and $G(D^*)$ are the corresponding Gauss diagrams respectively, as shown in Figure \ref{19}. According to the definition of the mirror image of a knotoid diagram, $G(D^*)$ is the Gauss diagram obtained by changing the direction of each chord in $G(D)$. By the definition of $D^*$, clearly $\sgn_{D^*}(c)=-\sgn_D(c)$ for each crossing $c$, hence $\forall c\in C(D)$, $i_{D^*}(c)=i_D(c)$. Moreover, $r_{D^*}(c)=r_D(c)$ and $l_{D^*}(c)=l_D(c)$. It can be deduced from the four equalities above that 
	$\widetilde{g}_{c,D^*}(v)=-\widetilde{g}_{c,D}(v)$ for each chord $c$ in $G(D)$. By Lemma \ref{Lemma 3.2.}, $u_{D^*}(c)=u_D(c)$. As a consequence,

	\begin{center}
		$
		\begin{aligned}
			\widetilde{B}_{D^*}(x,v)&=\displaystyle \sum\limits_{c\in C(D^*)}  u_{D^*}(c)x^{\widetilde{g}_{c,D^*}(v)}=\displaystyle \sum\limits_{c\in C(D)}  u_D(c)x^{-\widetilde{g}_{c,D}(v)}=\widetilde{B}_D(x^{-1},v).\\
		\end{aligned}
		$\\
	\end{center}
	
	\begin{figure}[ht]
		\centering
		\includegraphics[width=0.88\textwidth]{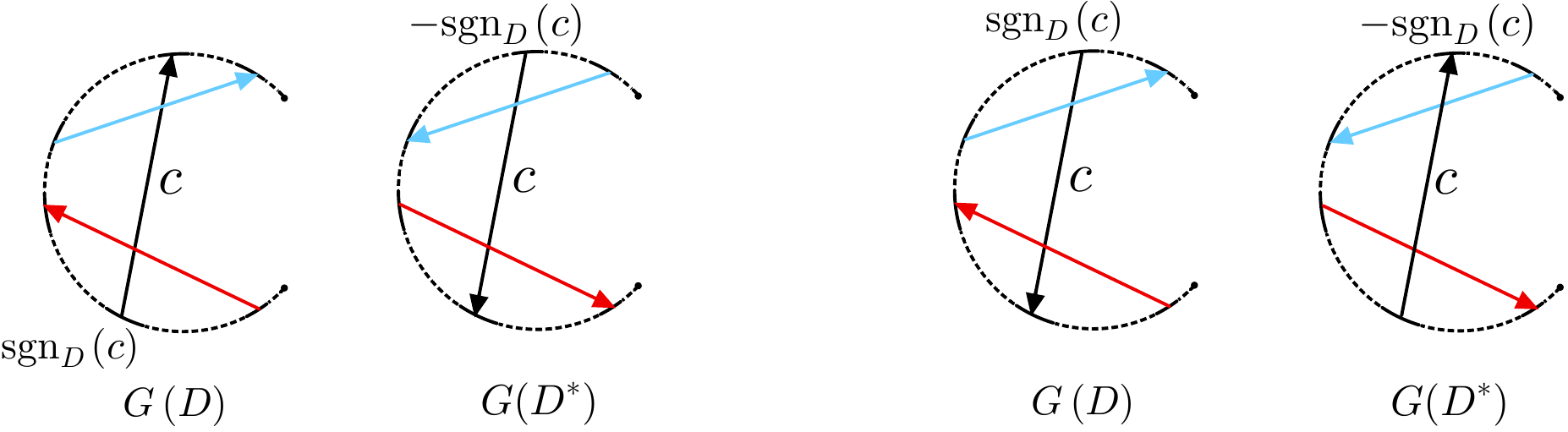}
		\caption{$G(D)$ and $G(D^*)$}
		\label{19}
	\end{figure}

	For $B_{D^*}(x,v)$, since $d_{D^*}(e)=-d_D(e)$ for an arbitrary $e\in C(D)$, which can be seen in Figure \ref{19}, it follows that
	\begin{center}
		$
		\begin{aligned}
			g_{c,D^*}(v)&=\sum_{e\in r(D^*)}\sgn_{D^*}(e)v^{\phi_c(d_{D^*}(e))}-\sum_{e\in l(D^*)}\sgn_{D^*}(e)v^{\phi_c(-d_{D^*}(e))}\\
			&=\sum_{e\in r(D)}-\sgn_{D}(e)v^{\phi_c(-d_{D}(e))}-\sum_{e\in l(D)}-\sgn_{D}(e)v^{\phi_c(d_D(e))}\\
			&=-\left( \sum_{e\in r(D)}\sgn_{D}(e)(v^{-1})^{\phi_c(d_{D}(e))}-\sum_{e\in l(D)}\sgn_{D}(e)(v^{-1})^{\phi_c(-d_D(e))}\right) \\
			&=-g_{c,D}(v^{-1})
		\end{aligned}
		$
	\end{center}
	for all chords in $G(D)$. Consequently,
	\begin{center}
		$
		\begin{aligned}
			B_{D^*}(x,v)&=\displaystyle \sum\limits_{c\in C(D^*)}  u_{D^*}(c)x^{{g}_{c,D^*}(v)}=\displaystyle \sum\limits_{c\in C(D)}  u_D(c)x^{-{g}_{c,D}(v^{-1})}={B}_D(x^{-1},v^{-1}).\\
		\end{aligned}
		$\\
	\end{center}

	If the lead side of $c$ in $G(D)$ is the right hand side, and the lead side of $c$ in $G(D^*)$ is the left hand side, as shown in the left half of Figure \ref{19}, then
	${h}_{c,D^*}(v)={g}_{c,D^*}(v)=-{g}_{c,D}(v^{-1})={h}_{c,D}(v)$
	and
	$\widetilde{h}_{c,D^*}(v)=\widetilde{g}_{c,D^*}(v)=-\widetilde{g}_{c,D}(v)=\widetilde{h}_{c,D}(v^{-1})$.
	Otherwise, if the lead side of $c$ in $G(D)$ is the left hand side, and the lead side of $c$ in $G(D^*)$ is the right hand side, as shown in the right half of Figure \ref{19}, then
	${h}_{c,D^*}(v)=-{g}_{c,D^*}(v^{-1})={g}_{c,D}(v)={h}_{c,D}(v)$
	and
	$\widetilde{h}_{c,D^*}(v)=-\widetilde{g}_{c,D^*}(v^{-1})=\widetilde{g}_{c,D}(v^{-1})=\widetilde{h}_{c,D}(v^{-1})$.
	Different cases lead to the same conclusion.  As a result, 
	\begin{center}
		$
		\begin{aligned}
			{C}_{D^*}(x,v)&=\displaystyle \sum\limits_{c\in C(D^*)}  u_{D^*}(c)x^{{h}_{c,D^*}(v)}=\displaystyle \sum\limits_{c\in C(D)}  u_D(c)x^{{h}_{c,D}(v)}={C}_D(x,v)\\
		\end{aligned}
		$\\
	\end{center}
	and
	\begin{center}
		$
		\begin{aligned}
			\widetilde{C}_{D^*}(x,v)&=\displaystyle \sum\limits_{c\in C(D^*)}  u_{D^*}(c)x^{\widetilde{h}_{c,D^*}(v)}=\displaystyle \sum\limits_{c\in C(D)}  u_D(c)x^{\widetilde{h}_{c,D}(v^{-1})}=\widetilde{C}_D(x,v^{-1}).
		\end{aligned}
		$\\
	\end{center}
	
	\end{proof}

	\begin{corollary}\label{Corollary 4.2.} If $D$ is amphicheiral, then\\ (1)$B_{D}(x,v)=B_D(x^{-1},v^{-1})$,\\    (2)$\widetilde{B}_{D}(x,v)=\widetilde{B}_D(x^{-1},v)$, $\widetilde{C}_{D}(x,v)=\widetilde{C}_D(x,v^{-1})$.
	\end{corollary}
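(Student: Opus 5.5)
This follows directly from Theorem \ref{Theorem 4.3.}, by specialising it to the case where $D^*$ represents the same planar knotoid as $D$.

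By definition, $D$ is amphicheiral when $D$ and $D^*$ are equivalent planar knotoid diagrams, i.e.\ related by a finite sequence of oriented Reidemeister moves and planar isotopies. Theorem \ref{Theorem 4.1.} says $B$, $\widetilde{B}$ and $\widetilde{C}$ are invariants of planar knotoids. Hence
\[
B_{D^*}(x,v)=B_D(x,v),\qquad \widetilde{B}_{D^*}(x,v)=\widetilde{B}_D(x,v),\qquad \widetilde{C}_{D^*}(x,v)=\widetilde{C}_D(x,v).
\]

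Next, substitute these into the identities of Theorem \ref{Theorem 4.3.}:
\[
B_{D^*}(x,v)=B_D(x^{-1},v^{-1}),\qquad \widetilde{B}_{D^*}(x,v)=\widetilde{B}_D(x^{-1},v),\qquad \widetilde{C}_{D^*}(x,v)=\widetilde{C}_D(x,v^{-1}).
\]
Comparing left-hand sides gives exactly statements (1) and (2).

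The $C$-function gives nothing here, since Theorem \ref{Theorem 4.3.} already yields $C_{D^*}=C_D$ for every $D$; this is why it is absent from the corollary.

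The only real point is justifying that amphicheirality means $D^*$ is equivalent to $D$ as a \emph{planar} knotoid with the same orientation, so that Theorem \ref{Theorem 4.1.} applies to the diagrams $D$ and $D^*$ directly. The theorem is stated for planar knotoid diagrams, and amphicheirality here is understood in that same planar setting. With that reading there is no further obstacle: the argument is a direct substitution, exactly parallel to how Corollary \ref{Corollary 4.1.} follows from Theorem \ref{Theorem 4.2.}.
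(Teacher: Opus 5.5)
Your proposal is correct and is exactly the argument the paper intends. The paper gives no separate proof, but the corollary is meant to follow from Theorem~\ref{Theorem 4.3.} combined with the invariance in Theorem~\ref{Theorem 4.1.}, in the same way Corollary~\ref{Corollary 4.1.} follows from Theorem~\ref{Theorem 4.2.}; you also correctly explain why the $C$-function is omitted.
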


	\noindent\textbf{Example 4.2.} By the result of Example \ref{Example 4.1.}, we can see that  $\widetilde{C}_{D}(x,v)\neq 0$, hence the planar knotoid diagram $D$ shown in Figure \ref{19} is not invertible by Corollary \ref{Corollary 4.2.}. Again by Example \ref{Example 4.1.}, we have $\widetilde{B}_{D}(x,v)\neq \widetilde{B}_D(x^{-1},v)$, which implies the cheirality of $D$.\\

	Unlike the crossings that we have met so far, some crossings in a knotoid diagram may not be endowed with over/under-crossing information, which are called singular crossings, are decorated with a black dot. An oriented singular planar knotoid diagram is an oriented planar knotoid diagram with finite singular crossings. Two oriented singular planar knotoid diagrams are said to be equivalent if one can be obtained from the other by a finite sequence of oriented Reidemeister moves, singular Reidemeister moves and planar isotopies. The corresponding equivalence classes are called singular planar knotoids.

	Let $D(c)$ be an oriented singular planar knotoid diagram with a singular crossing $c$. Denote the planar knotoid diagram obtained by replacing $c$ by a positive crossing and a negative crossing by $D(c^+)$ and $D(c^-)$ respectively, as shown in Figure \ref{20}. Any planar knotoid invariant $v$ can be extended to an invariant of singular planar knotoids using the Vassiliev skein relation:
	\begin{center}
		$v(K)=v(D(c))=v(D(c^+))-v(D(c^-))$,
	\end{center}
	where $D(c)$ is a diagram of $K$ and $c$ is a singular crossing of $D(c)$. $v(K)$ does not depend on the choice of $D(c)$ since $v$ is a planar knotoid invariant. A planar knotoid invariant $v$ is called a Vassiliev invariant of order $n$ if $v(K)\neq 0$ for a singular planar knotoid $K$ with $n$ singular crossings and $v$ vanishes for singular planar knotoids with more than $n$ singular crossings. One may refer to \cite{16}, \cite{9} or \cite{14} for details.

	\begin{figure}[ht]
		\centering
		\includegraphics[width=0.42\textwidth]{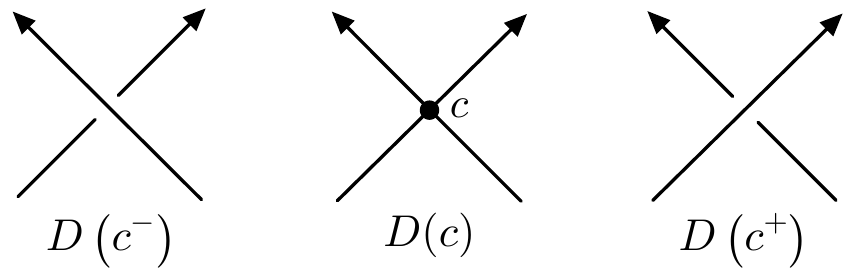}
		\caption{$D(c^+)$ and $D(c^-)$}
		\label{20}
	\end{figure}

	\begin{theorem}\label{Theorem 4.4.} Let $K$ be a planar knotoid, then $B_K(x,v)$ is a Vassiliev invariant of order 1.
	\end{theorem}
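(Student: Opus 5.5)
The plan is to compute directly the effect of a single crossing change on $B_D(x,v)$. From this I will read off that the first Vassiliev difference is a single explicit term. The second difference will then vanish identically, and one example will show the first difference is nonzero.

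\textbf{Step 1 (effect of a crossing change on the Gauss diagram).} Passing from $D(c^+)$ to $D(c^-)$ reverses the direction of the chord $c$ in the Gauss diagram and changes $\sgn(c)$ to $-\sgn(c)$. If $c$ has sign $s$, its starting point carries $s$ and its ending point carries $-s$. After the change, the old ending point becomes the starting point and carries $-s$, while the old starting point carries $s$. So the sign attached to each endpoint position on the arc is unchanged, and therefore $d(e)$ is unchanged for every chord $e\neq c$. Since $\sgn(e)$ is also unchanged, Proposition~\ref{Proposition 3.1.} gives that $u(e)$ is unchanged. The coloring number $u(c)$ does not depend on the crossing type (see the remark after its definition), hence $u(c)$ is unchanged as well. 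On the other hand, $d(c)$ changes to $-d(c)$ by Proposition~\ref{Proposition 3.1.}, while $|d(c)|$, and hence $\phi_c$, stays the same.

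\textbf{Step 2 (effect on the index functions).} Fix $e\neq c$ with $e\cap c\neq\varnothing$. If $c\in r(e)$ before the change, it contributes $\sgn(c)v^{\phi_e(d(c))}$ to $g_e(v)$. After the change $c\in l(e)$, and it contributes $-(-\sgn(c))v^{\phi_e(-(-d(c)))}$, which is the same term. All other contributions to $g_e(v)$ are unchanged by Step 1, so $g_e(v)$ is unchanged.

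For $c$ itself, the reversal of $c$ swaps $r(c)$ and $l(c)$, while the $d(e)$ and $\sgn(e)$ are fixed. Using the oddness $v^{\phi_c(-k)}=v^{-\phi_c(k)}$ exactly as in the computation of $h_c=-g_c(v^{-1})$ in Section 2, the new index function of $c$ is $-g_c(v^{-1})$, where $g_c$ is computed in $D(c^+)$. Consequently
$$B_{D(c)}(x,v)=B_{D(c^+)}(x,v)-B_{D(c^-)}(x,v)=u(c)\left(x^{g_c(v)}-x^{-g_c(v^{-1})}\right),$$
with $u(c)$ and $g_c$ taken in $D(c^+)$.

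\textbf{Step 3 (vanishing on two singular crossings).} Let $D$ have two singular crossings $c,c'$. Then $B_D$ is the $c'$-difference of the expression in Step 2. By Steps 1–2 applied to $c'$, none of the quantities $u(c)$, $\sgn(e)$, $d(e)$ for $e\neq c'$, or the contribution of $c'$ to $g_c(v)$, depends on the crossing type of $c'$. Hence $u(c)\bigl(x^{g_c(v)}-x^{-g_c(v^{-1})}\bigr)$ is the same for both resolutions of $c'$, and the difference is $0$. It follows that $B$ vanishes on all singular knotoids with two or more singular crossings.

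\textbf{Step 4 (nonvanishing on one singular crossing).} Take the diagram $D$ of Example~\ref{Example 4.1.} and make $c_1$ singular. There $u(c_1)=3$, $g_{c_1}(v)=-v^2-v-1$, and $|d(c_1)|=3$ so $v^3=1$. Then $-g_{c_1}(v^{-1})=v^2+v+1\neq g_{c_1}(v)$. If $c_1$ is positive the difference is $3(x^{-v^2-v-1}-x^{v^2+v+1})\neq0$; if it is negative we get the negative of this, which is still nonzero.

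The main obstacle is the sign bookkeeping in Steps 1–2: one must check that the endpoint signs at fixed positions really are invariant under reversing the chord and flipping its sign. Everything else rests on that observation.
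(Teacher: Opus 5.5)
Your proof is correct and follows essentially the paper's argument. The vanishing on two singular crossings rests on the same observation the paper uses: a crossing change at one crossing leaves $u(e)$, $\sgn(e)$, $d(e)$ and $g_e(v)$ unchanged at every other crossing $e$, so the four resolutions cancel in pairs. The differences are only organizational: you first record the one-singular-crossing value $u(c)\bigl(x^{g_c(v)}-x^{-g_c(v^{-1})}\bigr)$, which agrees with the paper's value $x-x^{-1}$ for $D_0$, and you use crossing $c_1$ of the diagram in Example 4.1 as the nonvanishing witness instead of $D_0$.
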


	\begin{proof} Suppose that $D(c_1,c_2)$ is a singular planar knotoid diagram with 2 singular crossings $c_1$ and $c_2$. For convenience, let $D^{++}=D(c_1^+,c_2^+)$, $D^{+-}=D(c_1^+,c_2^-)$, $D^{-+}=D(c_1^-,c_2^+)$ and $D^{--}=D(c_1^-,c_2^-)$.  Since $u(c_i^+)=u(c_i^-)$, denote $u(c_i)=u(c_i^+)=u(c_i^-)$ for $i=1,2$. Then a direct calculation gives
	\begin{center}
		$
		\begin{aligned}
			&B_{D(c_1,c_2)}(x,v)\\
			=&B_{D(c_1,c_2^+)}(x,v)-B_{D(c_1,c_2^-)}(x,v)\\
			=&B_{D(c_1^+,c_2^+)}(x,v)-B_{D(c_1^-,c_2^+)}(x,v)-B_{D(c_1^+,c_2^-)}(x,v)+B_{D(c_1^-,c_2^-)}(x,v)\\
			=&B_{D^{++}}(x,v)-B_{D^{-+}}(x,v)-B_{D^{+-}}(x,v)+B_{D^{--}}(x,v)\\
			=&\sum\limits_{c\in C(D^{++})}u(c)x^{g_{c,D^{++}}(v)}-\sum\limits_{c\in C(D^{-+})}u(c)x^{g_{c,D^{-+}}(v)}-\sum\limits_{c\in C(D^{+-})}u(c)x^{g_{c,D^{+-}}(v)}
			+\sum\limits_{c\in C(D^{--})}u(c)x^{g_{c,D^{--}}(v)}\\
			=&u(c_1)(x^{g_{c_1^+,D^{++}}(v)}-x^{g_{c_1^-,D^{-+}}(v)}-x^{g_{c_1^+,D^{+-}}(v)}+x^{g_{c_1^-,D^{--}}(v)})+\\
			&u(c_2)(x^{g_{c_2^+,D^{++}}(v)}-x^{g_{c_2^+,D^{-+}}(v)}-x^{g_{c_2^-,D^{+-}}(v)}+x^{g_{c_2^-,D^{--}}(v)}).
		\end{aligned}
		$
	\end{center}

	\begin{figure}[ht]
		\centering
		\includegraphics[width=0.88\textwidth]{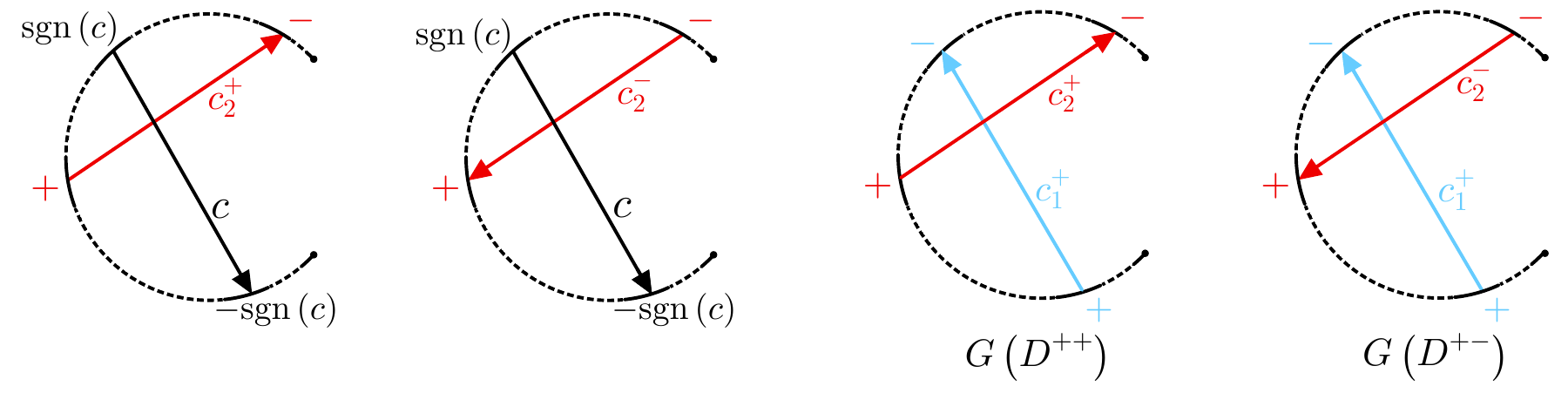}
		\caption{Gauss diagrams of knotoid diagrams relevant to $D(c_1,c_2)$}
		\label{21}
	\end{figure}

	The last step follows from the fact that $g_{c,D^{++}}(v)=g_{c,D^{-+}}(v)=g_{c,D^{+-}}(v)=g_{c,D^{--}}(v)$ for each crossing $c$ of $D(c_1,c_2)$ apart from $c_1$ and $c_2$, which can be easily verified by Figure \ref{21}. $\forall c\in C(D)\setminus \left\lbrace c_1,c_2\right\rbrace $, it contributes equally to $g_{c_1^+,D^{++}}(v)$ and $g_{c_1^+,D^{+-}}(v)$, so what really matters is the contribution of $c_2$. Apparently, $\sgn(c_2^+)=1$ and $\sgn(c_2^-)=-1$. Moreover, we can find that $d(c_2^+)=-d(c_2^-)$ and the direction of $c_2^+$ in $G(D^{++})$ and the direction of $c_2^-$ in $G(D^{+-})$ are opposite in Figure \ref{21}. As a consequence,
	\begin{center}
		$
		\begin{aligned}
			&g_{c_1^+,D^{++}}(v)-g_{c_1^+,D^{+-}}(v)\\
			=&(\varepsilon \sgn(c_2^+)v^{\phi_{c_1}(\varepsilon d(c_2^+))})-(-\varepsilon \sgn(c_2^-)v^{\phi_{c_1}(-\varepsilon d(c_2^-))})\\
			=&\varepsilon v^{\phi_{c_1}(\varepsilon d(c_2^+))}-\varepsilon v^{\phi_{c_1}(-\varepsilon d(c_2^-))}\\
			=&\varepsilon v^{\phi_{c_1}(\varepsilon d(c_2^+))}-\varepsilon v^{\phi_{c_1}(\varepsilon d(c_2^+))}=0,
		\end{aligned}
		$
	\end{center}
	which gives  $g_{c_1^+,D^{++}}(v)=g_{c_1^+,D^{+-}}(v)$, where $\varepsilon\in \left\lbrace -1,1\right\rbrace $ depends on the direction of $c_1^+$ and $c_2^+$. A similar argument yields $g_{c_1^-,D^{-+}}(v)=g_{c_1^-,D^{--}}(v)$, $g_{c_2^+,D^{++}}(v)=g_{c_2^+,D^{+-}}(v)$ and $g_{c_2^-,D^{-+}}(v)=g_{c_2^-,D^{--}}(v)$. Hence $B_{D(c_1,c_2)}=0$.

	Let $D_0(c)$ be the knotoid diagram with exactly 1 singular point $c$ shown in Figure \ref{22}, then by Table \ref{Table 3}, $B_{D_0(c^-)}(x,v)=x^{-1}-x$. We can see that $D_0(c^+)$ is isotopic to a trivial knotoid diagram after an $\Omega_2$ move, therefore 
	$B_{D_0(c^+)}(x,v)=0$. As a result, 
	\begin{center}
		$B_{D_0}(x,v)=B_{D_0(c^+)}(x,v)-B_{D_0(c^-)}(x,v)=x-x^{-1}\neq 0$
	\end{center} 
	when $x\neq \pm 1$, which completes the proof.

	 \end{proof}

	\begin{figure}[ht]
		\centering
		\includegraphics[width=0.6\textwidth]{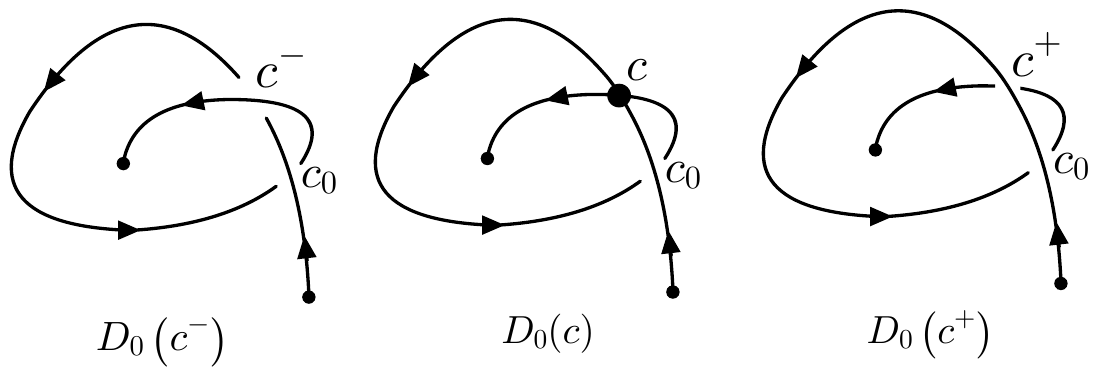}
		\caption{$D_0(c)$, $D_0(c^-)$ and $D_0(c^+)$}
		\label{22}
	\end{figure}

	\begin{table}[h!]
		\centering
		\renewcommand{\arraystretch}{1.4}
		\begin{tabular}{|c|c|c|}
			\hline
			$D_0(c^-)$&$c_0$&$c^-$\\
			\hline
			$u(c)$&1&$-1$\\
			\hline
			$d(c)$&$-1$&1\\
			\hline
			$g_c(v)$&$-1$&$1$\\
			\hline
		\end{tabular}
			\caption{$D_0(c^-)$}
			\label{Table 3}
	\end{table}

	\begin{proposition}\label{Proposition 4.2.} Let $D_1D_2$ be the product of $D_1$ and $D_2$, then\\ (1)$B_{D_1D_2}(x,v)=B_{D_1}(x,v)+B_{D_2}(x,v)$,\\ (2)$C_{D_1D_2}(x,v)=C_{D_1}(x,v)+C_{D_2}(x,v)$,\\
		(3)$\widetilde{B}_{D_1D_2}(x,v)=\widetilde{B}_{D_1}(x,v)+\widetilde{B}_{D_2}(x,v)$,\\ (4)$\widetilde{C}_{D_1D_2}(x,v)=\widetilde{C}_{D_1}(x,v)+\widetilde{C}_{D_2}(x,v)$.\\
	\end{proposition}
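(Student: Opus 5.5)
The plan is to work at the level of Gauss diagrams. Recall that the product $D_1D_2$ is obtained by attaching the head of $D_1$ to the leg of $D_2$ (after placing $D_2$ in a small disk near the head of $D_1$ that does not meet $D_1$ elsewhere). Then $C(D_1D_2)=C(D_1)\sqcup C(D_2)$. The Gauss diagram $G(D_1D_2)$ is the concatenation of $G(D_1)$ followed by $G(D_2)$ along the oriented arc: every chord of $D_1$ has both endpoints in the initial segment, and every chord of $D_2$ has both endpoints in the final segment. In particular, no chord of $D_1$ crosses a chord of $D_2$. Signs and over/under directions of chords are unchanged.

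The key step is to check that every local quantity entering the 4-phases functions is unchanged for each $c\in C(D_i)$. First, $l(c)$, $r(c)$ and the signs of their members are the same in $G(D_i)$ and in $G(D_1D_2)$, since only chords of $D_i$ can cross $c$. Second, $d(c)$ is a signed count over chords crossing $c$, so $d_{D_1D_2}(c)=d_{D_i}(c)$. Hence $\phi_c$ is the same, and $g_c(v)$ is preserved.

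For $i(c)$, $\lambda(c)$ and $\rho(c)$ I must check that the lead side of $c$ is the same in both diagrams. The lead side is the side containing the leg and head of the arc. For $c\in C(D_1)$, the leg of $D_1D_2$ is the leg of $D_1$, and the new head lies beyond all of $G(D_1)$, hence on the same side as the old head of $D_1$. A symmetric argument applies to $c\in C(D_2)$. So $\lambda,\rho$ are preserved, and $i(c)=\pm d(c)$ is preserved with the same sign; consequently $h_c,\widetilde g_c,\widetilde h_c$ are preserved. Finally, $u(c)=d(c)\sgn(c)$ by Proposition \ref{Proposition 3.1.}, so $u(c)$ is preserved as well.

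Summing over $C(D_1)\sqcup C(D_2)$ then gives all four additivity formulas at once. The only point requiring care, and thus the main obstacle, is the lead-side argument. One must justify that the Gauss diagram of the product really is the concatenation, with $D_2$ placed so that no new crossings arise. One must also note that the side of a chord containing both endpoints of the arc is determined by whether the arc's ends lie outside the interval between the chord's endpoints, and this property is stable under extending the arc at either end.
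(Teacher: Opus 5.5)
Your proposal is correct and follows the paper's own route. The Gauss diagram of $D_1D_2$ is the concatenation of $G(D_1)$ and $G(D_2)$, and no chord of one factor crosses a chord of the other, so $\sgn(c)$, $d(c)$, $u(c)$ and the index function of every crossing are unchanged and the sum splits. Beyond the paper, you explicitly check that the lead side (hence $\lambda$, $\rho$ and $i$) is preserved, which the paper leaves as ``analogous'' for parts (2)--(4), and you obtain the invariance of $u(c)$ from Proposition~\ref{Proposition 3.1.} instead of asserting it as clear.
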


	\begin{proof} Here we only prove the first equality as an instance. It is clear that $u_{D_1D_2}(c)=u_{D_i}(c)$ for each crossing $c$ in $D_i$, $i=1,2$. $\forall c_1\in C(D_1), c_2\in C(D_2)$, the endpoints of $c_2$ lie on one side of the chord $c_1$ in $G(D_1D_2)$, hence $c_2$ contributes 0 to $d_{D_1D_2}(c_1)$, which gives $d_{D_1D_2}(c_1)=d_{D_1}(c_1)$. Moreover, $c_1\cap c_2=\varnothing$ for the chords $c_1$ and $c_2$ in $G(D_1D_2)$, thus $c_2$ contributes 0 to $g_{c_1,D_1D_2}(v)$ as well, which implies $g_{c_1,D_1D_2}(v)=g_{c_1,D_1}(v)$. Analogously, $g_{c_2,D_1D_2}(v)=g_{c_2,D_2}(v)$. In a word, $g_{c,D_1D_2}(v)=g_{c,D_i}(v)$ for each crossing $c$ in $D_i$, $i=1,2$. Finally we obtain $B_{D_1D_2}(x,v)=B_{D_1}(x,v)+B_{D_2}(x,v)$.
	
	\end{proof}

	In Example \ref{Example 4.1.}, we have seen that $C_D(x,v)=0$. Indeed, among the 4-phases functions, $C_D(x,v)$ is more likely to be trivial in some cases. The corollary given below indicates that it is not a coincidence.

	Two planar knotoid diagrams are said to be homotopic if one can be deformed into the other by a finite sequence of crossing changes.

	\begin{proposition}\label{Proposition 4.1.} Let $D_1$ and $D_2$ be two homotopic planar knotoid diagrams. Then $C_{D_1}(x,v)=
		C_{D_2}(x,v)$.
	\end{proposition}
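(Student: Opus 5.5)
Since homotopy is generated by crossing changes, it suffices to treat $D_1$ and $D_2$ differing by a single crossing change at one crossing $c_0$; the general case then follows by induction on the number of crossing changes. So let $D_2$ be obtained from $D_1$ by changing $c_0$. All crossings are in natural bijection, and we compare each term $u(c)x^{h_c(v)}$ in $C_{D_1}$ and $C_{D_2}$ separately. The strategy is to show that every term is unchanged, not merely that the sums agree.

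\textbf{Step 1 (the coloring numbers).} A crossing change does not alter the underlying planar curve or its orientation. The Cheng coloring rule depends only on whether an arc crosses to the left or right of the arc it meets, not on over/under information. Hence the colorings, and therefore $u(c)$ for every $c$ (including $c_0$, by the crossing-type independence noted after Figure \ref{7}), coincide in $D_1$ and $D_2$. By Proposition \ref{Proposition 3.1.}, $d(c)=u(c)\sgn(c)$. Thus for $c\neq c_0$ the degree $d(c)$ is unchanged, while $\sgn(c_0)$ and $d(c_0)$ both flip sign. In particular $|d(c)|$ is unchanged for every $c$, so each $\phi_c$ is the same map in both diagrams.

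\textbf{Step 2 (the Gauss diagrams).} In the Gauss diagram the change reverses the chord $c_0$ and negates its sign; all other chords are untouched. The lead side of any chord is determined by the positions of its endpoints relative to the leg and head, which are unchanged. Therefore, for $c\neq c_0$, the only possible change in $h_c(v)$ comes from the term of $c_0$ when $c_0$ crosses $c$. If $c_0\in\rho_{D_1}(c)$, its contribution there is $\sgn(c_0)v^{\phi_c(d(c_0))}$. In $D_2$ the reversed chord lies in $\lambda_{D_2}(c)$ and contributes
\[
-(-\sgn(c_0))v^{\phi_c(-(-d(c_0)))}=\sgn(c_0)v^{\phi_c(d(c_0))},
\]
which is the same. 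The case $c_0\in\lambda_{D_1}(c)$ is symmetric.

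\textbf{Step 3 (the crossing $c_0$ itself).} For $c=c_0$, the lead side is again unchanged, but every chord $e$ crossing $c_0$ now points to the other side of $c_0$, relative to the reversed chord's own orientation. This is the main point to be checked carefully. The lead side is defined via the arc, not via the chord's direction, while $\rho$ and $\lambda$ are defined relative to the lead side. So I expect $\rho(c_0)$ and $\lambda(c_0)$ to be literally unchanged. The values $\sgn(e)$ and $d(e)$ for $e\neq c_0$ are also unchanged, and $\phi_{c_0}$ is the same. Hence $h_{c_0}(v)$ is unchanged as well.

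This makes $h$ (unlike $g$, whose left/right split flips with the chord direction) insensitive to crossing changes, which explains the contrast with $B_D$. Combining Steps 1--3, every term $u(c)x^{h_c(v)}$ agrees, so $C_{D_1}(x,v)=C_{D_2}(x,v)$.
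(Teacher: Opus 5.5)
Your proof is correct and follows the paper's own argument. Both reduce to a single crossing change at $c_0$. Both observe that $\rho(c_0)$, $\lambda(c_0)$ and the degrees of the other chords are unchanged, so $h_{c_0}(v)$ is preserved. Both then check that for every other $c$ the simultaneous reversal of the chord $c_0$, of $\sgn(c_0)$ and of $d(c_0)$ leaves its contribution to $h_c(v)$ unchanged.

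Two points you spell out that the paper leaves implicit or reads off its Gauss-diagram figure. First, $u(c)$ is unchanged, because the Cheng coloring depends only on the flat diagram. Second, $d(c_0)$ flips sign, which you obtain via Proposition~\ref{Proposition 3.1.}.
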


	\begin{proof} 
		By the hypothesis, $D_2$ can be obtained from $D_1$ by a finite sequence of crossing changes, hence we may assume that $D_1$ and $D_2$ are two planar knotoid diagrams which differ by exactly one crossing change at a crossing $c_0$, and the conclusion follows by an induction on the original number of crossing changes.

		The Gauss diagrams corresponding to $D_1$ and $D_2$ are shown in Figure \ref{17}. By the figure it is easy to see that $\rho_{D_1}(c_0)=\rho_{D_2}(c_0)$, $\lambda_{D_1}(c_0)=\lambda_{D_2}(c_0)$ and $d_{D_1}(c)=d_{D_2}(c)$, which indicates that $c$ contributes equally to $h_{c_0,D_1}(v)$ and $h_{c_0,D_2}(v)$ for an arbitrary crossing $c$ with $c_0\cap c\neq \varnothing$ for the corresponding chords in the Gauss diagrams, hence $h_{c_0,D_1}(v)=h_{c_0,D_2}(v)$. Moreover, for such a crossing $c$, since $\sgn_{D_2}(c_0)=-\sgn_{D_1}(c_0)$, $d_{D_2}(c_0)=-d_{D_1}(c_0)$ and the fact that the directions of the chord $c_0$ in $G(D_1)$ and $G(D_2)$
		are opposite, we have
		\begin{center}
			$
			\begin{aligned}
				h_{c,D_1}(v)-h_{c,D_2}(v)&=\varepsilon \sgn_{D_1}(c_0)v^{\phi_c(\varepsilon d_{D_1}(c_0))}-\left(-\varepsilon \sgn_{D_2}(c_0)v^{\phi_c(-\varepsilon d_{D_2}(c_0))}\right)  \\
				&=\varepsilon \sgn_{D_1}(c_0)v^{\phi_c(\varepsilon d_{D_1}(c_0))}-\varepsilon\sgn_{D_1}(c_0)v^{\phi_c(\varepsilon d_{D_1}(c_0))}=0,
			\end{aligned}
			$
		\end{center}
		where $\varepsilon\in\left\lbrace -1,1\right\rbrace $ depends on the direction of $c_0$. By the illustration above, obviously $C_{D_1}(x,v)=C_{D_2}(x,v)$.

	\end{proof}

	\begin{figure}[ht]
		\centering
		\includegraphics[width=0.48\textwidth]{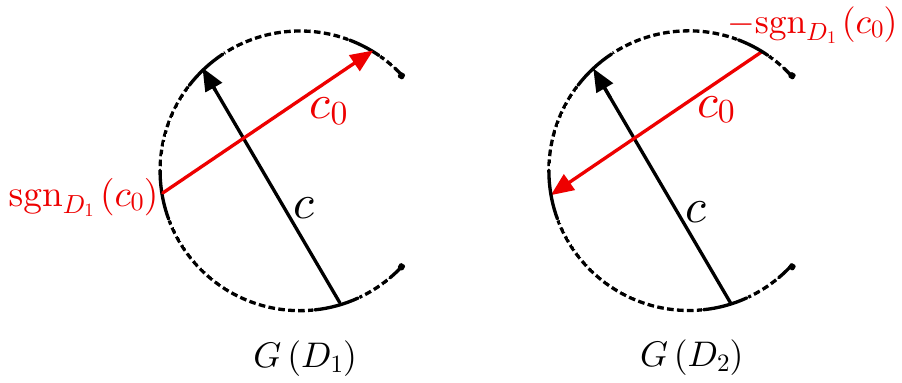}
		\caption{$G(D_1)$ and $G(D_2)$}
		\label{17}
	\end{figure}

	\begin{corollary}\label{Corollary 4.3.} Let $D$ be a planar knotoid diagram. If $D$ is homotopic to a trivial knotoid diagram, then $C_D(x,v)=0$.
	\end{corollary}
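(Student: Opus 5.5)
Corollary \ref{Corollary 4.3.} follows directly from Proposition \ref{Proposition 4.1.}. By hypothesis there is a finite sequence of crossing changes taking $D$ to a trivial knotoid diagram. Call the last diagram $D_0$; it is a trivial knotoid diagram in the sense of the paper, i.e.\ the image of an embedding with no crossings. If instead one reads the hypothesis as ``homotopic to a diagram of the trivial knotoid,'' I would first pass to such a diagram and then use Theorem \ref{Theorem 4.1.}. Applying Proposition \ref{Proposition 4.1.} to the pair $D$ and $D_0$, which are homotopic planar knotoid diagrams, gives $C_D(x,v)=C_{D_0}(x,v)$.

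Next I compute $C_{D_0}(x,v)$. A trivial knotoid diagram has $C(D_0)=\varnothing$, so the defining sum $\sum_{c\in C(D_0)}u(c)x^{h_c(v)}$ is empty and $C_{D_0}(x,v)=0$. Therefore $C_D(x,v)=0$.

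The only point needing care is how ``homotopic to a trivial knotoid diagram'' is interpreted. If crossing changes are combined with Reidemeister moves, so that the end diagram is merely equivalent to a crossingless one, I would alternate two facts. Proposition \ref{Proposition 4.1.} shows $C$ is unchanged by crossing changes, and Theorem \ref{Theorem 4.1.} shows $C$ is unchanged by oriented Reidemeister moves. Inducting along the sequence of operations then again gives $C_D=C_{D_0}=0$. This is the step I expect to be the main obstacle, though it is only a matter of interpretation.
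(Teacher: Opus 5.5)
Your proposal is correct and is the paper's own argument: Proposition~\ref{Proposition 4.1.} gives $C_D(x,v)=C_{D_0}(x,v)$ for the crossingless diagram $D_0$, and since $C(D_0)=\varnothing$ the defining sum is empty, so $C_{D_0}(x,v)=0$. Your extra appeal to Theorem~\ref{Theorem 4.1.} correctly covers the broader reading in which Reidemeister moves are also allowed; under the paper's literal definition (crossing changes only) $D$ already has no crossings, because crossing changes do not alter the set of crossings.
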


	Corollary \ref{Corollary 4.3.} follows from the fact that there is no crossing in a trivial knotoid diagram. In this way we can see that $C_D(x,v)$ vanishes at least for those knotoid diagrams which are homotopic to a trivial knotoid diagram. Therefore it is natural to ask if this is true for all planar knotoid diagrams, hence the question given below arises naturally:

	\noindent\textbf{Question.} Is it true for every planar knotoid diagram $D$ that $C_D(x,v)=0$?

	However, this question has not been solved yet, since we are not familiar with the homotopy classes of planar knotoid diagrams which have not been studied thoroughly until now, and it is quite tough to decide whether $C_D(x,v)$ vanishes or not for an arbitrary planar knotoid diagram which is not homotopic to a trivial knotoid diagram without a clear picture on the homotopy classes of planar knotoid diagrams. We are looking forward to an answer to this question.

\section*{Acknowledgements}
This work was supported by a grant of NSFC (No. 12331003) and the Fundamental Research Funds for the Central Universities (No. DUT25LAB302).

	\bibliographystyle{plain}
	\bibliography{4p-references.bib}

\end{document}